\title{The theta number of simplicial complexes}
\author{Christine Bachoc} 
\address{Institut de Math\'ematiques de Bordeaux, UMR 5251, Universit\'e de Bordeaux, 351 Cours de la Lib\'eration, 33400 Talence, France.}
\email{christine.bachoc@u-bordeaux.fr}
\author{Anna Gundert} 
\address{Mathematisches Institut, Universit\"at zu K\"oln, Weyertal 86-90, 50931 K\"oln, Germany.}
\email{anna.gundert@uni-koeln.de}
\author{Alberto Passuello} 
\address{Institut de Math\'ematiques de Bordeaux, UMR 5251, Universit\'e de Bordeaux, 351 Cours de la Lib\'eration, 33400 Talence, France.}
\email{alberto.passuello@u-bordeaux.fr}
\date{\today}
\newtheorem{defi}{Definition}[section]
\newtheorem{definition}[defi]{Definition}
\newtheorem{proposition}[defi]{Proposition}
\newtheorem{theorem}[defi]{Theorem}
\newtheorem{remark}[defi]{Remark}
\newtheorem{lemma}[defi]{Lemma}
\newtheorem{example}[defi]{Example}
\newcommand{\R}{{\mathbb{R}}}
\newcommand{\la}{\langle}
\newcommand{\ra}{\rangle}
\newcommand{\one}{{\bf 1}}
\newcommand{\Id}{{\bf I}}
\newcommand{\CC}{\mathcal{C}}
\newcommand{\HH}{\mathcal{H}}
\newcommand{\YY}{\mathcal{Y}}
\newcommand{\OX}{\overline{X}}
\newcommand{\ima}{\operatorname{im}}
\newcommand{\trace}{\operatorname{trace}}
\newcommand{\Ind}{\operatorname{Ind}}
\newcommand{\diag}{\operatorname{diag}}
\newcommand{\Lu}{L^{\uparrow}}
\newcommand{\Ld}{L^{\downarrow}}
\newcommand{\lk}{\operatorname{lk}}
\begin{document}

\begin{abstract}
We introduce a generalization of the celebrated Lov\'asz theta number
of a graph to simplicial complexes of arbitrary dimension.
Our generalization takes advantage of real simplicial cohomology theory, in particular combinatorial Laplacians, 
and provides a semidefinite programming upper bound of the independence number of a simplicial complex.
We consider properties of the graph theta number such as the relationship to Hoffman's ratio bound and to the chromatic number and study how they extend to higher dimensions. 
 Like in the case of graphs, the higher dimensional theta number can be extended to a hierarchy
of semidefinite programming upper bounds reaching the independence number. We analyse
the value of the theta number and of the hierarchy for dense random simplicial complexes.
\end{abstract}

\maketitle

\section{Introduction}\label{sec:introduction}

The theta number $\vartheta(G)$ of a graph $G$ was  introduced by L. Lov\'asz in his seminal paper
\cite{Lov}, in order to provide
spectral bounds of the independence number and of the chromatic number
of $G$.
 In modern terms, $\vartheta(G)$ is the optimal value of
a semidefinite program, and as such is computationally easy;
in contrast, the independence number $\alpha(G)$ and the chromatic
number $\chi(G)$ are difficult to compute. These graph invariants satisfy
the following inequalities, where $\overline{G}$ denotes the
complement of $G$:
\begin{equation}
\alpha(G)\leq \vartheta(G)\leq \chi(\overline{G}).
\end{equation}

The inequality $\alpha(G)\leq \vartheta(G)$ was one of the main ingredients in
Lov\'asz' proof  of the Shannon conjecture on the capacity of the
pentagon \cite{Lov}. More generally, this inequality plays a central role in
extremal combinatorics, sometimes in a disguised form: to cite a few, the Delsarte linear
programming method in coding theory \cite{De} and recent generalizations of
Erd\"os-Ko-Rado theorems \cite{DLV, EFP, EFF} can be interpreted as  instances of this inequality. Analogs of the theta
number in geometric settings have lead 
to many advances in packing problems (see \cite{OV} and references
therein), in particular the very
recent solutions to the sphere packing problems in dimensions $8$ and
$24$ \cite{CKMRV,Vi}. 

Our aim in this paper is to generalize this graph parameter to higher
dimensions, in the framework of \emph{simplicial
  complexes}. Let us recall that an (abstract)
  simplicial complex $X$ on a finite set $V$ is a family of subsets of $V$ called
 \emph{faces} that is closed under taking subsets. We refer
to Section 1 for basic
definitions and results about simplicial complexes. Graphs fit in this
framework, being simplicial complexes of dimension $1$.
 In recent years, considerable work has been
devoted to generalizing  the classical theory of graphs to
this higher-dimensional setting. Much of the efforts have focused on the notion of expansion (see, e.g., \cite{DKW, EK, Gr, KKL, Lu, PRT}), but other natural 
concepts such as random walks \cite{PR}, trees \cite{DKM,Ka}, planarity \cite{MTW}, girth \cite{DGK, LuMe}, independence and chromatic numbers \cite{EGL, Go} have been extended to higher dimensions. Some of these notions were introduced and studied previously in the context of \emph{hypergraphs}. Pure $k$-dimensional
simplicial complexes are essentially $(k+1)$-uniform
hypergraphs, but the topological point of view brings the machinery of algebraic topology such as homology theory to the subject.


The familiar graph-theoretic notions
of independence number and of chromatic number extend in a natural
way to this setting: For a $k$-dimensional simplicial complex $X$, an independent set is a set of vertices that does not contain any
maximal face of $X$, and  the \emph{independence number} $\alpha(X)$ is the maximal cardinality of
an independent set. The \emph{chromatic number}\footnote{In the study of hypergraphs, the chromatic number $\chi(X)$ is also known as the \emph{weak chromatic number} while $\chi(X_1)$, the chromatic number of the $1$-skeleton, is known as the \emph{strong chromatic number}.} $\chi(X)$ is the least number of colors needed to color the vertices so that no
maximal face of $X$ is monochromatic, in other words, it is the
smallest number of parts of a partition of the vertices into independent
sets.

In order to define the theta number $\vartheta_k(X)$ of a pure
$k$-dimensional simplicial complex $X$, we will follow an approach that
leads in a natural way  to the inequality $\alpha(X)\leq
\vartheta_k(X)$. The main
idea is to associate to an independent set $S$ a certain
matrix, and then to design a semidefinite
program that captures as many properties of this
matrix as possible. The matrix that we associate to an independent set is (up to
a multiplicative factor) a
submatrix of the \emph{down-Laplacian of the complete complex}. 
In the case of dimension $1$, the down-Laplacian is simply the all-one
matrix, and we end up with one of the many formulations of the Lov\'asz theta
number.

Our first task will be to compare $\vartheta_k(X)$ to the eigenvalue upper bound
of $\alpha(X)$ proved by Golubev in \cite{Go}. This upper bound involves
for $0\leq i\leq k-1$, the largest eigenvalues $\mu_i$ of the $i$-th up-Laplacians of $X$ and the minimal degrees $d_i$ of the $i$-faces
of $X$:
\begin{equation}\label{eq:bound Golubev}
\alpha(X)\leq n\bigg(1-\frac{(d_0+1)(d_1+2)\dots
  (d_{k-2}+k-1)d_{k-1}}{\mu_0\dots \mu_{k-1}}\bigg).
\end{equation}
When every possible $(k-1)$-face is contained in at least one $k$-face,
i.e., when $X$ has a \emph{complete $(k-1)$-skeleton}, this inequality simplifies
to 
\begin{equation}\label{eq:bound Golubev complete}
\alpha(X)\leq n\bigg(1-\frac{d_{k-1}}{\mu_{k-1}}\bigg)
\end{equation}
and can thus be seen as a natural generalization of the celebrated
\emph{ratio bound} for graphs attributed to Hoffman (see, e.g., \cite[Theorem
3.5.2]{BH}). In that case, we will show that
\begin{equation*}
\vartheta_k(X)\leq n\bigg(1-\frac{d_{k-1}}{\mu_{k-1}}\bigg),
\end{equation*}
therefore $\vartheta_k(X)$ provides an upper bound of $\alpha(X)$ that is at
least as good as \eqref{eq:bound Golubev complete}. In the case of a non-complete $(k-1)$-skeleton, Golubev's bound and $\vartheta_k(X)$ turn out to
be incomparable, as we will see in examples below.

The theta number of a graph has many very nice properties; some
of them, although unfortunately not all of them, can be generalized to higher
dimensions. Most of this paper is devoted to determining which
of the properties of the graph theta number extend to our notion of the
theta number of simplicial complexes.

The relationship to the chromatic number generalizes only partially. Indeed, the inequality $\alpha(X)\leq
\vartheta_k(X)$ immediately leads to the inequality
$n/\vartheta_k(X)\leq \chi(X)$. However, in the case of graphs, the stronger
inequality $\vartheta(\overline{G})\leq \chi(G)$ holds. We will see
that its  natural analog in the setting of $k$-complexes would be that 
$\vartheta_k(\overline{X})\leq k\chi(X)$ and that this inequality does not hold in
general. Instead, we will introduce an ad hoc notion of chromatic
number for simplicial complexes, denoted $\chi_k(X)$, and show that the inequality $\vartheta_k(\overline{X})\leq
\chi_k(X)$ holds. 
While $\chi(X)$ is defined using vertex colorings, the definition of
$\chi_k(X)$ is based on colorings of $(k-1)$-faces respecting
orientations. Moreover, it is tightly related to a notion of
\emph{homomorphisms} between pure $k$-dimensional simplicial complexes that we introduce
and that may be of interest by itself.

A very interesting benefit of the theta number of a graph is that it
is possible to expand it  into \emph{hierarchies} of semidefinite upper bounds of
the independence number; Lassere's hierarchy based on polynomial
optimization principles is one of the most popular (see \cite{Las, Lau}). We will see that
a similar situation holds in higher dimensions: to a
pure $k$-dimensional complex $X$ we will associate a sequence $\hat{\vartheta}_{\ell}(X)$ for
$\ell=k,\dots,\alpha(X)$ such that 
\[
\alpha(X) = \hat{\vartheta}_{\alpha(X)}(X)\leq
\dots \leq \hat{\vartheta}_{\ell}(X) \leq \dots\leq
\hat{\vartheta}_{k}(X)\leq \vartheta_k(X).
\]
In order to define
$\hat{\vartheta}_{\ell}(X)$, we will proceed in two steps: in a first step, we
define a natural sequence
$\vartheta_{\ell}(X)$ for $\ell=k,k+1,\dots,\alpha(X)$; in a second
step, we modify the definition of $\vartheta_{\ell}(X)$ slightly in such a way that the sequence of its values  decreases.

Our last results concern the theta number of random simplicial complexes $X^k(n,p)$ from the model proposed by Linial and Meshulam in \cite{LiMe}. This model is a higher-di\-men\-sional analog of the Erd\H{o}s-R\'enyi model $G(n,p)$ for random graphs and has gained increasing attention in recent years (see \cite{Kah} for a survey).

We show that $\vartheta_k(X^k(n,p))$ is of the order of
$\sqrt{(n-k)(1-p)/p}$ for probabilities $p$ such that $c_0\log(n)/n\leq p\leq
1-c_0\log(n)/n$ for some constant $c_0$. This result extends the known estimates for the
value of the theta number of the random graph $G(n,p)$.

The paper is organized as follows: Sections 2 and 3 recall basic definitions
and properties of simplicial complexes and semidefinite
programming. Section 4 recalls properties of the theta number of a
graph that serve as a guideline for the theta number  of a
$k$-dimensional simplicial complex, which is introduced in Section
5. Section 6 computes the theta number of certain basic
families of $2$-dimensional simplicial complexes. Section 7 discusses
chromatic numbers and Section 8 the hierarchy
of theta numbers. The final Section 9 contains the analysis of the theta number of random simplicial complexes.

\section{Simplicial complexes}\label{sec:complex}

Let $V=\{v_1,\dots, v_n\}$ be a finite set. We will use the notation
$\binom{V}{k}$ for the set of $k$-subsets of $V$. Let us recall that an \emph{(abstract)
  simplicial complex} $X$ on a vertex set $V$ is a family of subsets of $V$ (called
the \emph{faces} of $X$), such that if $F\in X$, then all subsets of $F$
also belong to $X$. The \emph{dimension} of a face $F\in X$ is $|F|-1$, and we denote by $X_i$ the set of $i$-dimensional faces of
$X$, with the convention $X_{-1}=\{\emptyset\}$.   Note that we do not require every element in $V$ to be a $0$-face of $X$, so $X_0$ can be a proper subset of $V$.
The \emph{$i$-skeleton} of $X$ is the simplicial complex $X_{-1}\cup
X_0\cup\dots \cup X_i$. 

A simplicial complex $X$ is said to be of dimension $k\geq 0$, if $k$ is
the maximal dimension of any of its faces. For example, a graph is a simplicial complex of dimension $1$. Going back to
the general case, if $X$ is of dimension $k$, and if moreover \emph{all}
maximal (with respect to inclusion) faces of $X$ are of dimension $k$, then
$X$ is said to be \emph{pure}. Unless explicitly mentioned, we will
only consider pure complexes.

A basic example of a pure $k$-dimensional simplicial complex is the
\emph{complete $k$-complex} $K_n^k$,
whose faces are all the subsets of $[n]=\{1,\dots,n\}$ 
that have at most $(k+1)$ elements. 

We note that in order to define a pure simplicial complex
of dimension $k$, it is enough to specify its set of $k$-dimensional
faces. In particular, the \emph{complementary complex} $\overline{X}$
of a  pure simplicial complex
of dimension $k$, is again a pure simplicial complex of dimension $k$, whose $k$-dimensional faces are 
those $(k+1)$-subsets of $V$ that do not belong to $X_k$ (we
adopt the convention that the empty complex, whose set of faces is empty, is pure
of dimension $k$ for all $k\geq 0$).

Let $X$ be a simplicial complex; we assume that every face of $X$ is
endowed with an \emph{orientation}, i.e., a local ordering of its
vertices.  Then, if $F\in X_i$ and $K\in X_{i-1}$, an \emph{oriented
  incidence number} $[F:K]\in\{0,\pm 1\}$ can be defined. 
Often, the
orientation of the faces is induced by a global ordering of the vertex
set $V$; in that case, if  $F=\{x_0,x_1,\dots,x_i\}$ where $x_0<x_1<\dots<x_i$ with respect to
this ordering,
\begin{equation*}
[F:K] =\left\{
\begin{array}{ll}
(-1)^j & \text{if }K\subset F \text{ and } F\setminus K=\{x_j\},\\
0 &\text{otherwise.}
\end{array}
\right.
\end{equation*}

The vector space of functions from $X_i$ to $\R$ is denoted by $\CC^i(X;\R)$ and its elements are called \emph{$i$-dimensional cochains of $X$ with coefficients in $\R$}. 
The \emph{coboundary map} 
$\delta_i: \CC^i(X;\R)\to \CC^{i+1}(X;\R)$ is defined for $-1\leq i<\dim(X)$ by
\begin{equation*}
(\delta_if)(H)=\sum_{F\in X_i} [H:F] f(F).
\end{equation*}
The image of $\delta_{i-1}$ is the subspace $B^i(X;\R)$ of \emph{$i$-dimensional coboundaries}, and the 
kernel  of $\delta_{i}$ is the subspace $Z^i(X;\R)$ of \emph{$i$-dimensional cocycles}. Because the coboundary maps satisfy 
$\delta_i\circ \delta_{i-1}=0$, we have $B^i(X;\R)\subseteq Z^i(X;\R)$. The quotient group 
\begin{equation*}
{H}^i(X;\R):=Z^i(X;\R)/B^i(X;\R).
\end{equation*}
is then called the \emph{$i$-th cohomology group of $X$ with coefficients in $\R$}.

Analogously, we can define the homology groups of a simplicial complex. For this, the spaces $\CC^i(X;\R)$ are endowed with the standard inner product 
$\la f,g\ra=\sum_{F\in X_i} f(F)g(F)$ and the \emph{boundary map} $\partial_{i+1}=\delta_i^*: \CC^{i+1}(X;\R)\to \CC^{i}(X;\R)$ is defined as the adjoint of the coboundary map $\delta_{i}$.
We have, for $F\in X_i$,
\begin{equation*}
(\partial_{i+1}f)(F)= \sum_{H\in X_{i+1}} [H:F] f(H).
\end{equation*}
The spaces of \emph{boundaries} $B_i(X;\R):=\ima \partial_{i+1}$ and of \emph{cycles} $Z_i(X;\R):=\ker \partial_i$ are subspaces of $\CC^i(X;\R)$ satisfying
$B_i(X;\R)\subseteq Z_i(X;\R)$ and thus define the 
\emph{$i$-th reduced homology group} of $X$ 
\begin{equation*}
{H}_i(X;\R):=Z_i(X;\R)/B_i(X;\R).
\end{equation*}
Moreover, by duality we have that $Z_i(X;\R)=B^i(X;\R)^\perp$ and
$Z^i(X;\R)=B_i(X;\R)^\perp$.
The following diagram  summarizes these linear maps for $0\leq i\leq \dim(X)-1$:
\begin{equation*}
\xymatrix{
\CC^{i+1}(X;\R)\ar@<-2.5pt>[r]_{\partial_{i+1}} & \CC^{i}(X;\R)\ar@<-2.5pt>[l]_{\delta_i}
\ar@<-2.5pt>[r]_{\partial_i} &\CC^{i-1}(X;\R)\ar@<-2.5pt>[l]_{\delta_{i-1}}
}
\end{equation*}

The $i$-th \emph{up-Laplacian} $\Lu_i$ and $i$-th
\emph{down-Laplacian} $\Ld_i$ of $X$ are the following self-adjoint and positive
semidefinite operators on $\CC^i(X;\R)$:
\begin{equation*}
\Ld_i:=\delta_{i-1}\partial_i, \quad \Lu_i:=\partial_{i+1}\delta_i.
\end{equation*}
By definition, $\Lu_i\Ld_i=\Ld_i\Lu_i=0$.  Furthermore, it is not hard to see that $\ker \Ld_i=Z_i(X;\R)$, 
$\ima \Ld_i=B^i(X;\R)$, $\ker \Lu_i=Z^i(X;\R)$, and $\ima \Lu_i=B_i(X;\R)$. For
\begin{equation*}
\HH_i(X;\R):=Z_i(X;\R)\cap Z^i(X;\R),
\end{equation*}
 we have the \emph{Hodge decomposition} of $\CC^i(X;\R)$ into pairwise orthogonal subspaces
\begin{equation*}
\CC^i(X;\R)=\HH_i(X;\R)\oplus B^i(X;\R)\oplus B_i(X;\R).
\end{equation*}
In particular, $\HH_i(X;\R)\simeq H^i(X;\R)\simeq H_i(X;\R)$.

The characteristic functions $e_F$ of faces $F\in X_i$ are called \emph{elementary cochains}; they form an orthonormal basis of $\CC^i(X;\R)$.
In order to express the 
 matrices of the Laplacian operators in this basis we introduce the following notation:
for $F\in X_i$, let $\deg(F)$ denote the \emph{degree} of $F$,
i.e., the number of $(i+1)$-faces of $X$ that contain $F$.  For
$(F,F')\in X_i^2$, such that $|F\cap F'|=i$, let
\begin{equation*}
\epsilon_{F,F'}:=[F:F\cap F'][F':F\cap F'].
\end{equation*}
We note that, if $F\cup F'\in X_{i+1}$, we can express $\epsilon_{F,F'}$ also as
\begin{equation*}
\epsilon_{F,F'}=-[F\cup F':F][F\cup F':F']. 
\end{equation*}
For $(F,F')\in X_i^2$, such that $|F\cap F'|\neq i$, we set $\epsilon_{F,F'}=0$.
Then, it is easy to see that
\begin{equation*}
(\Ld_i)_{F,F'}=\left\{ 
\begin{array}{ll}
i+1 & \text{ if } F=F'\\
\epsilon_{F,F'} & \text{ otherwise}
\end{array}
\right.
\end{equation*}
and 
\begin{equation*}
(\Lu_i)_{F,F'}=\left\{ 
\begin{array}{ll}
\deg(F) & \text{ if } F=F'\\
-\epsilon_{F,F'} &\text{ if } F\cup F'\in X_{i+1}\\
0 & \text{ otherwise}
\end{array}
\right.
\end{equation*}
where we use the same notations for the operators and for their matrices in the basis of elementary cochains.

\begin{example} In the case of the simplicial complex associated to a graph $G=(V,E)$, defined by
$X_{-1}=\{\emptyset\}$, $X_0=V$ and $X_1=E$, we find that $\Ld_0=J$ is
the all-ones matrix and 
$\Lu_0$ is equal to the combinatorial Laplacian  $L=D-A$ where $D$ is the diagonal matrix with the degrees of the vertices as diagonal elements and $A$ is the adjacency matrix of 
the graph.
\end{example}

\begin{example}\label{ex:complete} For the complete $k$-complex $K_n^k$, and for $0\leq
  i\leq k-1$, it is easy to verify that
\begin{equation*}
\Lu_i+\Ld_i=nI.
\end{equation*}
Together with the property $\Lu_i\Ld_i=0$, we obtain that $(\Lu_i)^2=n\Lu_i$ and that
$(\Ld_i)^2=n\Ld_i$. So $n$ is the only non zero eigenvalue of the up
and down Laplacians. Computing the traces of these operators gives the multiplicities of this eigenvalue, namely 
$\binom{n-1}{i}$ for $\Ld_i$ and
$\binom{n-1}{i+1}$ for $\Lu_i$. So we have
\begin{equation*}
\ker(\Lu_i-nI)=\ima(\Lu_i) =B_i, \quad \dim(B_i)=\binom{n-1}{i+1}, 
\end{equation*}
\begin{equation*}
\ker(\Ld_i-nI)=\ima(\Ld_i)=B^i, \quad \dim(B^i)=\binom{n-1}{i},
\end{equation*}
and, as these
dimensions add up to $\binom{n}{i+1}=\dim(C^i)$,  $\HH_i=\{0\}$.
\end{example}

We conclude this section  by recalling the definition of the adjacency matrix of 
 a $k$-dimensional simplicial complex $X$: it is the matrix $A$ such that $\Lu_{k-1}=D-A$ 
where  $D$ is the diagonal matrix encoding the degrees 
of the $(k-1)$-faces. In other words,
\begin{equation*}
A_{F,F'}=\left\{ 
\begin{array}{ll}
\epsilon_{F,F'} &\text{ if } F\cup F'\in X_{k}\\
0 & \text{ otherwise}
\end{array}
\right.
\end{equation*}
We note that in dimension $1$ this definition coincides with the usual notion of the adjacency matrix of a graph.

\section{Semidefinite programming}\label{sec:sdp}

In this section, we gather basic facts about  semidefinite
programs. For further information we refer to standard
references such as \cite{BN}, \cite{BV}, \cite{VB}.

Semidefinite programs (SDP for short) are special cases of convex optimization programs that admit
efficient algorithms, such as algorithms based on the so-called
interior point method. They generalize linear programs
and have turned out to be very useful for providing polynomial time
approximations of hard problems in many areas, especially in 
combinatorics (see, e.g., \cite{GM} and \cite[Chapter 6]{AL}). 

For a matrix $A\in \R^{n\times n}$ we say that $A$ is positive semidefinite, denoted by $A\succeq 0$,  if $A$ is
real-valued, symmetric, and if all its eigenvalues are nonnegative. If moreover none of its eigenvalues are equal to zero, $A$ is positive definite ($A\succ 0$).  The set of all positive
semidefinite matrices is a cone denoted by
$\R^{n\times n}_{\succeq 0}$. The space of real symmetric matrices is endowed with the standard inner product 
$\langle A,B\rangle=\trace(AB)$. 

Given $(c_1,\dots,c_m)\in \R^m$ and symmetric
matrices $A_0,\dots,A_m$ of size $n$, the following optimization problem is
a \emph{semidefinite program in primal form}:
\begin{equation*}
p^*=\sup \{ \langle A_0, Z\rangle \ :\ Z\in \R^{n\times n}_{\succeq 0}
,\ \langle A_i, Z\rangle =c_i\text{ for all }1\leq i\leq m\}.
\end{equation*}

In other words, this program asks for the supremum of a linear form, where this
supremum is taken over
the intersection of the cone of positive semidefinite matrices
with an affine space.

A \emph{feasible solution} of this program is a matrix $Z$ that
satisfies the required constraints: $Z\in \R^{n\times n}_{\succeq 0}$
and $\langle A_i, Z\rangle =c_i$. It is an \emph{optimal solution} if
its \emph{objective value} $\langle A_0,Z\rangle$ is equal to
$p^*$. If there is no feasible solution, we let $p^*=-\infty$. 

The following \emph{dual program} is attached to the primal program:
\begin{equation*}
d^*=\inf \{ c_1 x_1+\dots +c_m x_m \ :\ (x_1,\dots,x_m)\in \R^m,\ -A_0+ x_1A_1+\dots+x_mA_m
\succeq 0 \}.
\end{equation*}

The terms 'primal' and 'dual' do not refer to a specific class of
programs: Despite their apparent difference, any of these programs can be put in
the form of the other, and, as expected, dualizing twice returns the initial program.

The inequality $p^*\leq d^*$, referred to as \emph{weak duality},
always holds, and under some mild conditions even \emph{strong duality},
i.e., $p^*=d^*$, holds. Strong duality is guaranteed
if the SDP satisfies the so-called \emph{Slater's conditions}, of which we will use the following version: If an SDP has a 
strictly feasible primal solution, i.e., if there is a feasible solution $Z$ of the primal program such that $Z\succ 0$,  and a strictly feasible dual solution, i.e., there exists $(x_1,\dots,x_m)$ such that $-A_0+ x_1A_1+\dots+x_mA_m \succ 0$, then 
strong duality holds and, moreover, there are optimal solutions for both the primal and the dual program.

\section{The theta number of a graph}\label{sec:theta graph}

In this section,  we introduce the theta number of a graph
$G=(V,E)$.
Our presentation will serve as a guideline for the generalization to higher
dimensional simplicial complexes.

Let $S$ be an independent set of
$G$, i.e., a subset of $V$ not containing any edges.
The set
$S$ naturally defines a vector $\one_S\in \R^V$, namely its characteristic
vector. We consider the matrix $Y^S:=\one_S\one_S^T$, whose entries are given by:
\begin{equation*}
Y^S_{v,v'} = \left\{\begin{array}{ll}
0 &\text { if } \{v,v'\} \nsubseteq S\\
1  &\text{ otherwise}.
\end{array}\right.
\end{equation*}
The following properties of $Y^S$ motivate the definition of
$\vartheta(G)$: $Y^S$ is a positive semidefinite
matrix such that $Y^S_{v,v'}=0$ if $\{v,v'\}\in E$.
Furthermore, the
cardinality of $S$ can be recovered in two different ways from
$Y^S$: If $I$ and $J$ stand as
usual for the identity matrix and the all-ones matrix,  we have $\langle
I,Y^S\rangle=|S|$ and $\langle J,Y^S\rangle=|S|^2$. So, if we set
\begin{equation}\label{eq:theta primal}
\vartheta(G)=\sup\{ \langle J,Y\rangle \ :\  Y\in \R^{V\times V}, \ Y\succeq 0,\  \langle I,Y\rangle
=1,\  Y_{v,v'}=0 \ \text{ if }\{v,v'\}\in E \}
\end{equation}
 the matrix $|S|^{-1} Y^S$ is feasible for \eqref{eq:theta primal}  and we
get that $|S|\leq \vartheta(G)$.

Because \eqref{eq:theta primal} is a semidefinite program, its optimal value $\vartheta(G)$ 
can be approximated numerically up to
arbitrary precision in polynomial time in the size of $G$. 
If, instead of a sharp numerical value, one aims for a rougher upper bound of
$\vartheta(G)$, the
\emph{dual formulation} of \eqref{eq:theta primal}
is often more convenient:
\begin{equation}\label{eq:theta dual}
\vartheta(G)=\inf \{\lambda_{\max}(Z) \ :\ Z\in \R^{V\times V},\ Z=J+T,\ T_{v,v'}=0 \
\text{ if }\{v,v'\}\notin E \}.
\end{equation}
Here, $\lambda_{\max}(Z)$
denotes the largest eigenvalue of $Z$.

To illustrate this principle we consider a classical example. 
For any matrix $T$ such that $T_{v,v'}=0$ for all
$\{v,v'\}\notin E$, the dual formulation of
$\vartheta(G)$ provides the inequality $\alpha(G)\leq
\lambda_{\max}(J+T)$.
A possible choice for $T$ is a multiple of
the adjacency matrix $A$ of $G$, say $T=tA$. The best bound is obtained for $t$
minimizing $\lambda_{\max}(J+tA)$.
For $d$-regular graphs, the matrices $J$ and $A$ commute, so the eigenvalues of
$J+tA$ are easy to analyze. The optimal choice of
$t$ then leads to the so-called \emph{ratio bound} attributed to Hoffman (see, e.g., \cite[Theorem
3.5.2]{BH}):

\begin{equation}\label{eq:ratio bound}
\alpha(G)\leq \frac{-|V|\lambda_{\min}(A)}{d-\lambda_{\min}(A)}.
\end{equation}

\section{The  theta number of a simplicial complex}\label{sec:thetak}

We now move to higher dimensions and define the theta number of a
$k$-dimensional simplicial complex $X$.
As suggested in the introduction, the down-Laplacian
$\Ld_{k-1}$ of the complete complex $K_n^k$ will play the role of the all-ones
matrix $J$ in \eqref{eq:theta primal} and \eqref{eq:theta dual}. 
Recall that $\Ld_{k-1}$ is the matrix indexed by $\binom{V}{k}$
that is defined by:

\begin{equation*}
(\Ld_{k-1})_{F,F'}=\left\{ 
\begin{array}{ll}
k & \text{ if } F=F'\\
\epsilon_{F,F'} & \text{ otherwise}
\end{array}
\right.
\end{equation*}

We note that this matrix may not be the down-Laplacian of the complex $X$. Obviously, this is the case if and only if $X$ has a complete
$(k-1)$-skeleton, otherwise the down-Laplacian of $X$ is a principal
submatrix of $\Ld_{k-1}$. From now on, to avoid confusion, we will
denote the matrices associated to $X$ by $\Ld_i(X)$, $\Lu_i(X)$ and
reserve the notations $\Ld_i$, $\Lu_i$ for the complete complex.

Let $S\subset V$ be an independent set of $X$. 
Following the same strategy as in the case of graphs,
 we consider the following matrix $Y^S$, indexed by $\binom{V}{k}$:
\begin{equation}\label{e26}
(Y^S)_{F,F'} = \left\{\begin{array}{ll}
0 &\text { if } F \cup F' \nsubseteq S\\
(\Ld_{k-1})_{F,F'} &\text{ otherwise}.
\end{array}
\right.
\end{equation}
We have $Y^S=\delta_{\binom{S}{k}}\delta_{\binom{S}{k}}^T$, where as a generalization of the characteristic vector of $S$, we consider the matrix $\delta_{\binom{S}{k}}$ defined as follows:
\[
\big(\delta_{\binom{S}{k}}\big)_{K,F} =  \left\{\begin{array}{ll}
0 &\text { if } F\nsubseteq S\\
(\delta_S)_{K,F}  &\text{ otherwise},
\end{array}
\right.
\]
where $K \in \binom{V}{k-1}$, $F \in \binom{V}{k}$ and $\delta$ is the matrix of the boundary operator $\delta_{k-2}$ with respect to the basis of elementary cochains.
The properties of $Y^S$ lead to the following definition of
$\vartheta_k(X)$:

\begin{definition}\label{def:thetak}
Let $X$ be a pure $k$-dimensional complex on $V$,
and let $\Ld_{k-1}$ be the down Laplacian of the complete complex on $V$. Let:
\begin{equation}\label{eq:thetak primal}
\begin{array}{rl}
\vartheta_{k}(X):=\sup\big\{ \langle \Ld_{k-1},Y\rangle \ :&Y\in
                                                             \R^{\binom{V}{k}\times
                                                             \binom{V}{k}},\
                                                             Y\succeq
0, \ \langle I,Y\rangle=1, \\
&Y_{F,F'}=0 \text{ if } F\cup F' \in
  X_k,\\
&Y_{F,F'}=0 \text{ if }|F\cup F'|\geq k+2,\\
&\epsilon_{F,F'} Y_{F,F'}=\epsilon_{F'',F^\dag}Y_{F'',F^\dag} \text{ if } F\cup
  F'=F''\cup F^\dag\big\}
\end{array}
\end{equation}
\end{definition}

\begin{proposition}\label{prop:thetak}
We have
\begin{equation*}
\alpha(X)\leq \vartheta_k(X).
\end{equation*}
\end{proposition}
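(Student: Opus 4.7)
The strategy mirrors the graph case from Section \ref{sec:theta graph}: for every independent set $S\subseteq V$ with $|S|=s$, I will produce a feasible point of the SDP of Definition \ref{def:thetak} whose objective value is $s$; choosing $S$ to be a maximum independent set then gives $\alpha(X)\leq \vartheta_k(X)$. The natural candidate is a scalar multiple of the matrix $Y^S$ defined in \eqref{e26}. Positive semidefiniteness is free from the Gram-type decomposition of $Y^S$ recorded just after \eqref{e26}: evaluating entrywise shows that $(Y^S)_{F,F'}=\sum_K [F:K][F':K]$ when $F\cup F'\subseteq S$ and is $0$ otherwise, and this sum equals $k$ on the diagonal, $\epsilon_{F,F'}$ when $|F\cap F'|=k-1$, and $0$ when $|F\cap F'|\leq k-2$, matching \eqref{e26} exactly.

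Next I verify the three families of linear constraints of \eqref{eq:thetak primal}. If $F\cup F'\in X_k$, then $F\cup F'$ is a $(k+1)$-set that is a face of $X$, so the independence of $S$ forbids $F\cup F'\subseteq S$ and hence $(Y^S)_{F,F'}=0$. If $|F\cup F'|\geq k+2$ then $|F\cap F'|\leq k-2$, so already $\epsilon_{F,F'}=0$ and again $(Y^S)_{F,F'}=0$. For the symmetry constraint, when $F\cup F'=F''\cup F^\dag$ as $(k+1)$-sets, either this common $(k+1)$-set lies in $S$---in which case $\epsilon_{F,F'}(Y^S)_{F,F'}=\epsilon_{F,F'}^2=1=\epsilon_{F'',F^\dag}^2=\epsilon_{F'',F^\dag}(Y^S)_{F'',F^\dag}$---or it does not, and both sides vanish.

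Finally, a short count gives $\trace(Y^S)=k\binom{s}{k}$, since each diagonal entry is $k$ when $F\in\binom{S}{k}$ and $0$ otherwise. Splitting $\langle\Ld_{k-1},Y^S\rangle$ into diagonal and off-diagonal parts, and noting that each ordered pair of distinct $k$-subsets of a common $(k+1)$-subset of $S$ contributes $\epsilon_{F,F'}^2=1$, one obtains
\[
\langle \Ld_{k-1}, Y^S \rangle \;=\; k^2\binom{s}{k} + k(k+1)\binom{s}{k+1} \;=\; ks\binom{s}{k},
\]
using the identity $(k+1)\binom{s}{k+1}=(s-k)\binom{s}{k}$. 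Normalizing $Y:=Y^S/\trace(Y^S)$ yields a feasible solution with $\langle I,Y\rangle=1$ and $\langle \Ld_{k-1},Y\rangle=s$, giving $\vartheta_k(X)\geq s$ as desired.

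Nothing here is conceptually hard once the candidate $Y^S$ is in hand; the only slightly delicate point is the symmetry constraint in \eqref{eq:thetak primal}, which demands that $\epsilon_{F,F'}Y_{F,F'}$ depend only on $F\cup F'$. The Gram-matrix construction makes this automatic via $\epsilon_{F,F'}^2=1$ on the support of $Y^S$. The other step worth stating is the telescoping $k^2\binom{s}{k}+k(k+1)\binom{s}{k+1}=ks\binom{s}{k}$, which is what makes the normalized objective recover exactly $s=|S|$ rather than some less informative function of $s$ and $k$.
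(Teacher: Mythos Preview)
Your proof is correct and follows essentially the same approach as the paper: both construct the feasible matrix as a normalization of $Y^S$, establish positive semidefiniteness via the Gram decomposition $Y^S=\delta_{\binom{S}{k}}\delta_{\binom{S}{k}}^T$, verify the linear constraints, and carry out the identical trace and objective computations to obtain the value $|S|$. Your write-up is slightly more explicit on the symmetry constraint and on the off-diagonal count, but there is no substantive difference in method.
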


\begin{proof} 
Let $S$ be an independent set with $|S| =\alpha(X)$.
As $Y^S=\delta_{\binom{S}{k}}\delta_{\binom{S}{k}}^T$, the matrix $Y^S$  is clearly positive semidefinite.
We have
\begin{equation}
\langle Y^S,I\rangle = k\binom{|S|}{k}
\end{equation}
and 
\begin{equation}
\begin{array}{ll}
\langle Y^S,\Ld_{k-1}\rangle &= \displaystyle k^2\binom{|S|}{k} + \sum_{\substack{
    |F\cup F'|=k+1\\  F\cup F' \subseteq S}} 1\\
&=\displaystyle k^2\binom{|S|}{k} +(k+1)k\binom{|S|}{k+1} = \displaystyle k\binom{|S|}{k}|S|.
\end{array}
\end{equation}
Moreover, from the fact that $S$ is an independent set, and from the
definition of $Y^S$ \eqref{e26}, 
it is clear that
$(Y^S)_{F,F'}=0$ if $F\cup F'\in X_k$, or if $|F\cup F'|\geq
k+2$. 

The conditions $\epsilon_{F,F'} Y_{F,F'}=\epsilon_{F'',F^\dag}Y_{F'',F^\dag}$
if $F\cup  F'=F''\cup F^\dag$ are satisfied by the
entries of $\Ld_{k-1}$, so the matrix $Y^S$ inherits this property.

To sum up, we have proved that the matrix $k^{-1}\binom{|S|}{k}^{-1}Y^S$
  is feasible for $\vartheta_k(X)$.
Since its objective value is equal to $|S|$,  we can conclude that
$\alpha(X)\leq \vartheta_k(X)$.
\end{proof}

Now we consider the dual program of \eqref{eq:thetak primal}, in order
to obtain another formulation of $\vartheta_k(X)$, similar to
\eqref{eq:theta dual}. 

\begin{proposition}
We have 

\begin{equation}\label{eq:thetak dual}
\begin{array}{rl}
\vartheta_{k}(X)
=\inf \big\{\ \lambda_{\max}(Z) \ :\ &  Z=\Ld_{k-1} + T, \\
&T_{F,F}=0 \text{ for all }F\in \binom{V}{k}\\
&\sum_{ F\cup F'=H} \epsilon_{F,F'}T_{F,F'}=0 \ \text{ if
  } H\in \binom{V}{k+1}\setminus X_k\big\}
\end{array}
\end{equation}
\end{proposition}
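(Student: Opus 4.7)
The plan is to derive \eqref{eq:thetak dual} as the Lagrangian dual of \eqref{eq:thetak primal} and then invoke the Slater's condition recalled in Section~\ref{sec:sdp} to obtain strong duality. Put \eqref{eq:thetak primal} in standard primal form $\sup\{\langle \Ld_{k-1}, Y\rangle : Y \succeq 0,\ \langle A_i, Y\rangle = c_i\}$, with $c_i = 0$ for every constraint except $\langle I, Y\rangle = 1$, whose dual variable will be $\lambda$. The homogeneous constraints split into three families of constraint matrices: (a) $\tfrac12(E_{F,F'}+E_{F',F})$ for each unordered pair of distinct $k$-subsets with $F \cup F' \in X_k$; (b) the same matrices for pairs with $|F \cup F'| \geq k+2$; (c) for each $H \in \binom{V}{k+1}$ and each ordered couple of pairs with $F \cup F' = F'' \cup F^{\dag} = H$, the matrix $\epsilon_{F,F'}(E_{F,F'}+E_{F',F}) - \epsilon_{F'',F^{\dag}}(E_{F'',F^{\dag}}+E_{F^{\dag},F''})$.

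Standard SDP duality then gives $d^{*} = \inf_{T}\ \lambda_{\max}(\Ld_{k-1} + T)$, where $T$ ranges over the linear span $\mathcal{T}$ of the constraint matrices in (a)--(c). It remains to identify $\mathcal{T}$ with the set appearing in \eqref{eq:thetak dual}. Every $T \in \mathcal{T}$ has zero diagonal since no constraint matrix touches a diagonal entry; the entries $T_{F,F'}$ with $F \cup F' \in X_k$ or $|F \cup F'| \geq k+2$ are completely free, because families (a) and (b) provide an independent symmetric generator at each such position. For fixed $H \in \binom{V}{k+1}\setminus X_k$, each generator of type (c) with union $H$ has its entries $c_{F,F'}$ satisfying $\sum_{F\cup F' = H} \epsilon_{F,F'}\,c_{F,F'} = 0$ (because $\epsilon_{F,F'}^2 = 1$), and a dimension count shows that the $\binom{k+1}{2}-1$ independent generators with union $H$ span the whole hyperplane defined by this orthogonality condition inside the $\binom{k+1}{2}$-dimensional pair-space. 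This matches exactly the description of admissible $T$'s in \eqref{eq:thetak dual}.

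To close, I verify Slater's condition. The matrix $Y_0 = \binom{n}{k}^{-1}\, I$ is strictly primal feasible: it is positive definite, satisfies $\langle I, Y_0\rangle = 1$, and has vanishing off-diagonal entries so every homogeneous constraint is trivially met. For the dual, the pair $(T, \lambda) = (0, n+1)$ is strictly feasible, because by Example~\ref{ex:complete} the largest eigenvalue of $\Ld_{k-1}$ on $K_n^k$ equals $n$, so $(n+1) I - \Ld_{k-1} \succ 0$. Hence strong duality holds and $\vartheta_k(X) = d^{*}$, which is the claimed formula. The only piece that is not purely mechanical is the dimension count for family (c), and this is elementary linear algebra.
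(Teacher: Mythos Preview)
Your proof is correct and follows essentially the same approach as the paper: derive \eqref{eq:thetak dual} as the SDP dual of \eqref{eq:thetak primal} and verify Slater's condition via the primal point $Y=\binom{n}{k}^{-1}I$ and the dual point $T=0$. The paper dismisses the identification of the dual feasible set as a ``straightforward rewriting'', whereas you spell out the span computation (in particular the dimension count for family (c)), but the content is the same.
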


\begin{proof} This is just a straightforward rewriting of the dual
  program. Both programs have the same objective value because
  Slater's condition holds: $Y=\binom{n}{k}^{-1}I$ is a strictly
  feasible solution of \eqref{eq:thetak primal} and $T=0$ gives rise to a  strictly feasible
  solution of \eqref{eq:thetak dual}.
\end{proof}

\begin{remark}\label{rk:thetak} Let us make a few obvious observations about
  $\vartheta_k(X)$. The first one, is that, as expected, $k\leq
  \vartheta_k(X)\leq n$. Indeed, the lower bound follows by taking
  $Y=\binom{n}{k}^{-1}I$ in \eqref{eq:thetak primal} while the upper
  bound follows by taking $T=0$ in \eqref{eq:thetak dual}.

The second observation is that $\vartheta_k(X)$ is easy to determine
for the empty and the complete $k$-complexes. Indeed, if $X$ is the
empty $k$-complex, the matrix $Y=k^{-1}\binom{n}{k}^{-1}\Ld_{k-1}$ is
feasible for \eqref{eq:thetak primal} giving that
$\vartheta_k(X)=n$. If $X$ is the complete $k$-complex, the
semidefinite program \eqref{eq:thetak primal} has only one feasible
solution which is $Y=\binom{n}{k}^{-1}I$ so $\vartheta_k(X)=k$. 

We note that, in these trivial cases, the equality $\alpha(X)=\vartheta_k(X)$ holds.
\end{remark}

The benefit of the formulation \eqref{eq:thetak dual} is that any
feasible matrix $T$ leads to an upper bound of $\vartheta_k(X)$ and
therefore to an upper bound of the independence number of $X$. Let us
illustrate this principle by showing that we can recover the upper bound proved by Golubev \cite{Go}
in the case of a $k$-dimensional simplicial complex $X$ with complete $(k-1)$-skeleton.

We take $T=\gamma(\Lu_{k-1}(X)-D_{k-1}(X))$ for some $\gamma\in \R$
that will be chosen later. Clearly $T$ satisfies the
conditions required by \eqref{eq:thetak dual}. Then
\begin{equation*}
\lambda_{\max}(\Ld_{k-1}+T)\leq \lambda_{\max} (\Ld_{k-1}+\gamma
\Lu_{k-1}(X))+\max_{F\in X_{k-1}}(-\gamma \deg(F)).
\end{equation*}

We assume that $X$ has complete $(k-1)$-skeleton, so we have $\Ld_{k-1}=\Ld_{k-1}(X)$
and $\Ld_{k-1}\Lu_{k-1}(X)=~0$. Let us denote by $\Lambda$ the set of
non zero eigenvalues of $\Lu_{k-1}(X)$. Then, the eigenvalues of the matrix $\Ld_{k-1}+\gamma \Lu_{k-1}(X)$ 
are:  $n$, associated to the eigenspace 
$B^{k-1}$, and  $\gamma\lambda$, for $\lambda\in \Lambda$, corresponding to
eigenvectors in $B_{k-1}$. For
$\gamma=\frac{n}{\lambda_{\max}(\Lu_{k-1}(X))}$, we have 
$\lambda_{\max} (\Ld_{k-1}+\gamma\Lu_{k-1}(X))=n$ and we get:
\begin{equation*}
\alpha(X)\leq \vartheta_k(X)\leq n\bigg(1-\frac{\deg_{\min}(X)}{\lambda_{\max}(\Lu_{k-1}(X))}\bigg).
\end{equation*}

We note that, if $X$ is regular, i.e., if $\deg(F)$ is a constant
number for $F\in \binom{V}{k}$, then this upper bound is the exact
analog of the ratio bound for graphs \eqref{eq:ratio bound}.

We have just seen that, in the case of a $k$-complex with complete $(k-1)$-skeleton, $\vartheta_k(X)$ is an upper bound of the independence
number of $X$ which is as least as good as the bound
\eqref{eq:bound Golubev}. The case of complexes with noncomplete $(k-1)$-skeleton turns out to be more tricky; indeed, in some cases
$\vartheta_k(X)$ provides a good bound of $\alpha(X)$, even a sharp
one, and beats the bound \eqref{eq:bound Golubev} given by Golubev,
while in other cases, Golubev's bound is better. 
We provide examples illustrating this situation in the next section, where we explicitly work out the computation of
$\vartheta_2(X)$ for certain families of $2$-dimensional complexes. This will also yield counterexamples for certain properties of the
theta number related to the chromatic number that we
might expect (see Section \ref{sec:chromatic}). It will also be interesting to observe the prominent
role plaed by the eigenvalues and eigenspaces of the Laplacian operators in these examples .

\section{The theta number of certain families of $2$-complexes}\label{sec:families}

\subsection{The complete tripartite $2$-complex}

To define this complex, we let $n=3m$ and partition $V=[n]$ into three subsets $A$, $B$,
$C$ of equal size $m$. As $2$-dimensional faces we select all triangles with exactly one vertex in each of these subsets; as $1$-dimensional faces all edges with at most one vertex in each of these subsets. 
A natural notation for this complex is $K_{m,m,m}^2$.
It is clear that
$\alpha(K_{m,m,m}^2)=2m$ because $A\cup B$ is a maximal independent set
with $2m$ vertices. We will show that $\vartheta_2(K_{m,m,m}^2)=2m$. 

With the notations of \eqref{eq:bound Golubev}, $d_0=2m$, $d_1=m$,
$\mu_0=3m$, $\mu_1=3m$ and the bound in \eqref{eq:bound Golubev}
equals $(7m-1)/3$, so this is an example where the theta number beats Golubev's
bound. 

We will also show that, for the complementary complex
$\overline{K_{m,m,m}^2}$, we have 
$\vartheta_2(\overline{K_{m,m,m}^2})=3=\alpha(\overline{K_{m,m,m}^2})$. 
This complex has a complete $1$-skeleton with $d_1=2m-2$ and
$\mu_1=3m$, so Golubev's bound \eqref{eq:bound Golubev} equals $(m+2)$, which
is not tight.

\begin{proposition}
We have $\vartheta_2(K_{m,m,m}^2)=2m$ and $\vartheta_2(\overline{K_{m,m,m}^2})=3$.
\end{proposition}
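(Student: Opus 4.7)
The lower bounds follow from Proposition~\ref{prop:thetak}: $\alpha(K_{m,m,m}^2)=2m$ is witnessed by $A\cup B$, and $\alpha(\overline{K_{m,m,m}^2})=3$ is witnessed by any transversal $\{a,b,c\}\in A\times B\times C$, whose only 3-subset, itself, is a 2-face of $K_{m,m,m}^2$ and hence not of its complement.

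For the matching upper bounds, the plan is to exhibit explicit dual feasible matrices in~\eqref{eq:thetak dual} achieving $\lambda_{\max}(\Ld_1+T)=2m$ and $3$, respectively. Both SDPs are invariant under the automorphism group $G=S_m\wr S_3$ acting on $V=A\cup B\cup C$, so it suffices to search among $G$-invariant matrices. The $G$-orbits on ordered pairs $(F,F')\in\binom{V}{2}\times\binom{V}{2}$ are classified by the intersection size $|F\cap F'|\in\{0,1,2\}$ together with the partition type of $F\cup F'$ across the three parts; a $G$-invariant $T$ is thus parameterized by only a handful of scalars. The constraints of~\eqref{eq:thetak dual} (zero diagonal; $\sum_{F\cup F'=H}\epsilon_{F,F'}T_{F,F'}=0$ for each $H\in\binom{V}{3}\setminus X_2$, or for each transversal $H\in X_2$ in the case of $\overline{X}$) then reduce to a small linear system in these parameters, which I would solve first.

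To evaluate $\lambda_{\max}(\Ld_1+T)$, I would next block-diagonalize under the $G$-isotypic decomposition of the permutation module $\R^{\binom{V}{2}}$. The isotypic components admit an explicit description in terms of tensor products of the trivial and standard $S_m$-representations on each part, combined according to the $S_3$-action permuting the parts, so on each component $\Ld_1+T$ acts as a small parameter-dependent matrix whose entries I can compute from the orbit structure. Optimizing the free parameters to minimize the maximum of these small-matrix spectra is expected to yield $\lambda_{\max}=2m$ for $X$ and $\lambda_{\max}=3$ for $\overline{X}$; the value $3$ for $\overline{X}$ in particular suggests that the optimal $\Ld_1+T$ has $2m$ as eigenvalue only on isotypic components of dimension at most $3$.

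The main obstacle is the identification of a suitable $T$: the Hoffman-style $T=\gamma(\Lu_1(X)-D_1(X))$ used in the general construction of Section~\ref{sec:thetak} yields only the suboptimal bounds $(7m-1)/3$ for $X$ and $m+2$ for $\overline{X}$, so a naive adjacency-based perturbation does not suffice. The sharp values $2m$ and $3$ force the perturbation $T$ to involve lower-rank components adapted to the tripartite structure --- information beyond what the local degree/adjacency data encodes --- and the representation-theoretic block-diagonalization above is what brings these tailored $T$ within reach.
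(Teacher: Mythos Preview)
Your lower bounds are correct. Your upper-bound plan---symmetrize under $G=S_m\wr S_3$, parameterize $G$-invariant $T$, block-diagonalize by isotypic components---is a valid strategy, but you stop short of executing it: no explicit $T$ is produced, no linear system is solved, and the phrase ``is expected to yield'' signals that the spectral verification is missing. As written this is an outline, not a proof; the ``main obstacle'' you name is precisely the content of the argument.

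The paper's route is more direct and differs in both halves. For $X=K_{m,m,m}^2$ it also works in the dual, but rather than parameterizing all invariant matrices it writes $T$ down explicitly as $T=2m(P_{3m}^\uparrow+P_{2m}^\uparrow+P_{3m}^\downarrow)-\Ld_1(X)$, a combination of projections onto eigenspaces of $\Lu_1(X)$ and $\Ld_1(X)$; the check $2m\Id-\Ld_1-T\succeq 0$ then splits into three mutually annihilating pieces, the last of which is handled by a Schur-complement computation on the block coupling edges in $X_1$ to the remaining edges of $\binom{V}{2}$. For $\overline{X}$ the paper uses the \emph{primal}, not the dual: transitivity of $G$ on $X_2$ and on $X_1$ forces any symmetric feasible $Y$ to have the shape $\beta\Lu_1(X)+\gamma\Id$ on the $X_1$-block (and a scalar multiple of $\Id$ on the rest), collapsing the SDP to a small linear program with optimum $3$. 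A still shorter argument, once the chromatic number $\chi_k$ of Section~\ref{sec:chromatic} is available, gives $\vartheta_2(\overline{X})\leq\chi_2(K_{m,m,m}^2)=3$ in one line. Your representation-theoretic reduction would recover these answers, but with considerably more bookkeeping than either of the paper's approaches.
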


\begin{proof}
To keep notations light we use the generic notation $X$ for $X=K_{m,m,m}^2$ throughout the proof. 
We will verify that $\vartheta_2(X)=2m$, by
constructing a suitable matrix $T$ feasible for \eqref{eq:thetak
  dual}. The matrix $T$ will be constructed from the
projection matrices associated to certain eigenspaces of $\Lu_1(X)$
and $\Ld_1(X)$.

We denote by $A\times B$  the set of edges
connecting one vertex in $A$ and one vertex in $B$, and similarly for
the other kinds of edges. So, $X_1=(A\times B)\cup (B\times C)\cup
(C\times A)$. We choose the orientations of the triangular faces
and of the edges of $X$ following the rule $A\to B\to C\to A$; this
way, $[G:F]=+1$ for all $G\in X_2$ and $F\in X_1$. 

It turns out that the up-Laplacian $\Lu_1(X)$ has three non zero
eigenvalues,  $3m$, $2m$ and $m$, respectively with 
multiplicity $1$,  $3(m-1)$, and $3(m-1)^2$. We will need the projection matrices $P_{3m}^\uparrow$ and $P_{2m}^\uparrow$
associated to the eigenvalues $3m$ and $2m$.

The all-one
vector is clearly an eigenvector of $\Lu_1(X)$ for the eigenvalue
$3m$, so
$P_{3m}^{\uparrow}=J_{3m^2}/(3m^2)$.
The space $V_A=\{ \sum_{a\in A} x_a (\one_{a\times B}+\one_{a\times C})
\ :\ \sum_{a\in A} x_a=0\}$ is easily seen to be an eigenspace 
of $\Lu_1(X)$ for the eigenvalue $2m$. Similarly, we have two other
$(m-1)$-dimensional eigenspaces $V_B$ and $V_C$, and these spaces are
pairwise orthogonal. In order to express the projection matrix $P_{2m}^{\uparrow}$
associated to the sum of these spaces, we introduce the following notation: for $(F,F')\in X_1^2$, we denote $F\sim F'$ if
$F$ and $F'$ both belong to $A\times B$ (respectively to $B\times C$,
$C\times A$). Then,

\begin{equation*}
(P_{2m}^{\uparrow})_{F,F'}=\frac{1}{2m^2}\cdot
\begin{cases}
2(m-1)&\text{ if }  F=F' \\
-2 &\text{ if }  F\sim F' \text{ and }F\cap  F'=\emptyset  \\
(m-2) &\text{ if }  F\sim F' \text{  and }F\cap  F'\neq \emptyset , F\neq F'\\
-1 &\text{  if } F\not\sim F' \text{  and }F\cap F'=\emptyset\\
(m-1) &\text{  if } F\not\sim F' \text{  and }F\cap F'\neq \emptyset\\
\end{cases}
\end{equation*}

The down Laplacian $\Ld_1(X)$ has two non zero eigenvalues: $3m$ with
multiplicity $2$ and $2m$ with multiplicity $3(m-1)$. 
The vector space $\{ \gamma\one_{A\times B}+\alpha\one_{B\times C}+\beta\one_{A\times
  C} \ :\ \alpha+\beta+\gamma=0\}$ is a two-dimensional space of eigenvectors for
$\Ld_1(X)$ and for the eigenvalue $3m$, and the corresponding
projection matrix $P_{3m}^{\downarrow}$ is given by:
\begin{equation*}
(P_{3m}^{\downarrow})_{F,F'}=\frac{1}{3m^2}\cdot\begin{cases}
2 &\text{ if } F \sim  F' \\
-1 &\text{ otherwise.}
\end{cases}
\end{equation*}

So far the matrices that we have  defined are indexed by
$X_1=(A\times B)\cup (B\times C) \cup (A\times C)$. We now will
consider matrices indexed by the whole set $\binom{V}{2}$, therefore we
extend the matrices introduced above by adding zero rows and columns
for the indices not belonging to $X_1$ (we keep the same notation for
the enlarged matrices). We are now ready to define the matrix $T$ that
will do the job for $\vartheta_2(X)$:

\begin{lemma}\label{lemma:tripartite} With the previous notations, let 
\begin{equation*}
T=2m(P_{3m}^\uparrow+P_{2m}^\uparrow+P_{3m}^\downarrow)-\Ld_1(X).
\end{equation*}
This matrix satisfies the following properties:
\begin{enumerate}
\item $T_{F,F}=0$ for all $F\in \binom{V}{2}$
\item $T_{F,F'}=0$ for all $F,F'$ such that $F\cap F'\neq \emptyset$
  and $F\cup F'\notin X_2$
\item $2m \Id -\Ld_1
-T \succeq 0$.
\end{enumerate}
\end{lemma}

\begin{proof} Properties (1) and (2) follow by direct verification. In
  order to prove (3), we write  $\Ld_1+T = U+V+W$ where 
$U=2m(P_{3m}^\uparrow+P_{2m}^\uparrow)$, $V=2m
P_{3m}^\downarrow$ and $W=\Ld_1(X)-\Ld_1$, and make the remark that 
the product of any two of these matrices is zero. Indeed, for $U,V$ and for $U,W$
it follows immediately from the property that the
product of up and down Laplacians is zero; for $V,W$, it is due to the
fact that the image of $P_{3m}^\downarrow$ is an eigenspace for the eigenvalue $3m$ not only
for $\Ld_1(X)$ but also for $\Ld_1$.
So, we need to prove that $2m\Id-U$, $2m\Id-V$ and $2m\Id-W$ are
positive semidefinite. For the first two it is obvious because $2m\Id-U=2m(\Id-
P_{3m}^\uparrow-P_{2m}^\uparrow)$ and
$2m\Id-V=2m(\Id-P_{3m}^\downarrow)$. 
So now the only missing piece is a proof that $2m\Id
-(\Ld_1-\Ld_1(X))\succeq 0$. 

For this, we arrange the elements of $\binom{V}{2}$ so that  those in
$X_1=(A\times B)\cup (B\times C)\cup (C\times A)$ come before those in
$(A\times A)\cup (B\times B) \cup (C\times C)$, and we accordingly write $\Ld_1$ by blocks:
$$\Ld_1=\begin{pmatrix} \Ld_1(X) & M \\
  M^T & N \end{pmatrix}. $$ We want to prove that 
\begin{equation*}
\begin{pmatrix}
2m\Id & -M \\
-M^T & 2m\Id-N
\end{pmatrix} \succeq 0.
\end{equation*}
By the Schur complement lemma, this is equivalent to $2m\Id-N -(2m)^{-1}M^T
M\succeq 0$. A direct computation shows that $M^T M=2m N$, so 
all boils down to $m\Id-N\succeq 0$, which is indeed true because $N$ is
a block-diagonal matrix with three blocks equal to $\Ld_1(K_m^2)$.
\end{proof}

Now, we turn our attention to $\overline{K_{m,m,m}^2}=\overline{X}$. In
order to prove that $\vartheta_2(\overline{X})=3$, we will use the
primal formulation \eqref{eq:thetak primal} and apply a symmetry
argument.
In the next section we will see a second, simpler, proof, using chromatic numbers, see Example \ref{ex:chromnum}.

With the previous notations, a feasible matrix $Y$ must be of the form:
\begin{equation*}
Y=\begin{pmatrix}
Y_1 & 0 \\ 0 & \tau \Id 
\end{pmatrix}
\end{equation*}
where $Y_1$ is supported on the diagonal and on the triangles that
belong to $X_2$, i.e., the triangles with one vertex in each of $A$,
$B$, $C$. It is clear that the automorphism group of $X$ permutes
transitively the  elements of $X_2$ and of $X_1$, and that, by convexity, \eqref{eq:thetak primal}
has a symmetric solution. So, without loss of generality, we can assume
that $Y_1=\beta \Lu_1(X)+\gamma \Id$. Restricting the semidefinite
program on this  set of matrices leads to 
a linear program in the variables $\beta$, $\gamma$, $\tau$ that can
be easily solved and leads to the optimal value $3$. We skip the
details here.

We note that this approach would not work for $\vartheta_2(X)$ because
$\overline{X}_2$ has two orbits: the triangles that are fully
contained in one of the subsets $A$, $B$, $C$ and the ones that
have two vertices in one of these sets and one vertex in another one.
\end{proof}

\subsection{The complete bipartite $2$-complex}

Now $n=2m$ and $V=[n]$ is partitioned in two subsets $A$, $B$,
of equal size $m$. As $2$-dimensional faces we select the triangles that meet both sets $A$ and $B$, thus having two vertices in one
of the parts and the third vertex in the other.
We denote this complex by $K_{m,m}^2$.
It is clear that $\alpha(K_{m,m}^2)=m$ since $A$ is an independent set
with $m$ vertices. This complex has a complete $1$-skeleton and $d_1=m$, $\mu_1=2m$ so the bound  \eqref{eq:bound Golubev
  complete} equals $m$, showing that $\vartheta_2(K_{m,m}^2)=m$ and that the theta number
agrees with Golubev's bound. 

For the complementary complex
$\overline{K_{m,m}^2}$, which is nothing else than the disjoint union
of two complete complexes $K_m^2$, we have $\alpha(\overline{K_{m,m}^2})=4$. 
Golubev's bound  is twice
 the value corresponding to $K_m^2$, thus $4$, and it is sharp
 again. As we will see know, $\vartheta_2(\overline{K_{m,m}^2})$ is
 much larger:

\begin{proposition}\label{prop:bipartite}
We have $\vartheta_2(K_{m,m}^2)=m$ and $\vartheta_2(\overline{K_{m,m}^2})=\frac{8m-4}{m+1}$.
\end{proposition}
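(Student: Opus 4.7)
The plan is to treat the two equalities separately, using the Hoffman-type bound for the first and a symmetry reduction of the primal SDP for the second.

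For $\vartheta_2(K_{m,m}^2) = m$: since $K_{m,m}^2$ has a complete $1$-skeleton, I would invoke the Hoffman-type bound
\[
\vartheta_2(K_{m,m}^2) \;\leq\; n\bigl(1-d_{1}/\mu_{1}\bigr)
\]
derived at the end of Section~\ref{sec:thetak}, and check that $d_1 = m$ (attained on the edges inside $A$ or inside $B$, each of which lies in exactly $m$ mixed triangles) and $\mu_1 = 2m$. For the latter I would split $\R^{\binom{V}{2}}$ into a same-part (SP) block and a cross-part (CP) block under the $S_m\wr S_2$-action. On the SP block, $\Lu_1(K_{m,m}^2)$ restricts to $m\,\Id$ (no two distinct SP edges share a triangle in $X_2$); on the CP block it is $2m\,\Id - J_A\otimes \Id_B - \Id_A\otimes J_B$, with spectrum $\{0,m,2m\}$; and the SP--CP coupling, built from $\delta_0^A$ and $\delta_0^B$, keeps the spectrum of $\Lu_1(K_{m,m}^2)$ in this set. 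The bound then gives $\vartheta_2(K_{m,m}^2)\le m$, matching the lower bound $\alpha(K_{m,m}^2)=m$ from Proposition~\ref{prop:thetak}.

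For $\vartheta_2(\overline{K_{m,m}^2}) = (8m-4)/(m+1)$: I would work with the primal formulation \eqref{eq:thetak primal}, restricted to matrices $Y$ invariant under the automorphism group $S_m\wr S_2$ of $\overline{K_{m,m}^2}$. By Reynolds averaging over this group, the supremum is attained on such symmetric $Y$, which is described by three scalars: the common diagonal values $a$ and $b$ on the SP and CP blocks, and a common value $y$ for $\epsilon_{F,F'}^{-1}Y_{F,F'}$ over pairs $(F,F')$ with $F\cup F'$ a mixed triangle (all other off-diagonal entries being forced to zero by the SDP constraints). I would then diagonalize $Y$ using the decomposition
\[
\R^{\binom{V}{2}} \;=\; V_{SP\text{-}A}\oplus V_{SP\text{-}B}\oplus (\R^A\otimes \R^B),
\]
refining $V_{SP\text{-}A} = \ima\delta_0^A \oplus \ker\partial_1^A$ (similarly for $B$) and $\R^A\otimes \R^B = (\R\one_A\oplus \one_A^\perp)\otimes(\R\one_B\oplus \one_B^\perp)$. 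On the ``fully orthogonal'' piece $\one_A^\perp\otimes \one_B^\perp$, $Y$ acts by the scalar $b-2y$; on the trivial piece $\R\one_A\otimes \R\one_B$, by $b+(2m-2)y$; on each of $\ker\partial_1^A, \ker\partial_1^B$, by $a$; and the remaining ``mixed'' sectors yield $2(m-1)$ copies of the $2\times 2$ block
\[
\begin{pmatrix} a & \pm ym \\ \pm ym & b+(m-2)y \end{pmatrix}.
\]
Thus $Y\succeq 0$ reduces to the scalar conditions $a\ge 0$, $b\ge 2y$, $b+(2m-2)y\ge 0$, and $a(b+(m-2)y)\ge y^2m^2$.

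Together with the trace normalization $m(m-1)a + m^2 b = 1$ and the objective $\langle \Ld_{1},Y\rangle = 2 + 6m^2(m-1)y$, this reduces to a small explicit maximization in $a,b,y$. At the optimum I expect both $b = 2y$ and $a(b+(m-2)y) = y^2m^2$ to be active; solving these together with the trace constraint yields $a = 1/(m(m+1))$, $b = 2/(m^2(m+1))$, $y = 1/(m^2(m+1))$, and objective value $(8m-4)/(m+1)$. The main difficulty is the block diagonalization of $Y$: one has to track the incidence signs $\epsilon_{F,F'}$ carefully in order to identify the SP--CP off-diagonal blocks with $\pm y(\delta_0^A\otimes \one_B^{\top})$ and $\pm y(\one_A^{\top}\otimes \delta_0^B)$. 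Once this is in place, the positive-semidefiniteness analysis and the final Lagrangian optimization are routine.
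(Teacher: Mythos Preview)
Your approach is correct and essentially the same as the paper's. For $\vartheta_2(K_{m,m}^2)=m$, both you and the paper use the Hoffman-type bound $n(1-d_1/\mu_1)$ with $d_1=m$ and $\mu_1=2m$ matched against $\alpha=m$. For $\vartheta_2(\overline{K_{m,m}^2})$, both reduce the primal SDP by the $S_m\wr S_2$ symmetry to a three-parameter family and solve the resulting constrained optimization; the paper parametrizes this family as $Y=\beta\,\Lu_1(K_{m,m}^2)+\gamma\,\Id_{\text{out}}+\tau\,\Id_{\text{in}}$ and handles the PSD condition via the Schur complement on the in/out block decomposition, whereas you parametrize by the diagonal values $(a,b)$ and the common off-diagonal value $y$ and carry out a full isotypic block diagonalization. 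The two parametrizations are related by $a=m\beta+\tau$, $b=2(m-1)\beta+\gamma$, $y=-\beta$, and your optimal $(a,b,y)=\bigl(\tfrac{1}{m(m+1)},\tfrac{2}{m^2(m+1)},\tfrac{1}{m^2(m+1)}\bigr)$ is exactly the paper's $Y=-\tfrac{1}{m^2(m+1)}\Lu_1(K_{m,m}^2)+\tfrac{2}{m(m+1)}\Id$.
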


\begin{proof} We let $X=K_{m,m}^2$. 
To compute $\vartheta_2(\overline{X})$, we again apply the symmetry principle, like  in the case of the complement of the
  tripartite complex. The automorphism group of $K_{m,m}^2$  has two
  orbits in $X_1=\binom{V}{2}$: the set $X_1^{\text{in}}$ of edges
  contained in $A$ or in $B$, having degree $m$, and  the set  $X_1^{\text{out}}$
   of 'crossing' edges, with degree $2(m-1)$. It acts transitively on
   the $2$-faces. So without loss of generality  a feasible matrix $Y$ of the
  primal formulation of $\vartheta_2(\overline{X})$ can be
  assumed to be  
\begin{equation*}
Y=\beta\Lu_1(X) +\gamma \Id_{\text{out}} +\tau \Id_{\text{in}}
\end{equation*}
where $\Id_{\text{out}}$ and $\Id_{\text{in}}$ denote the $0-1$ diagonal
matrices associated to respectively $X_1^{\text{out}}$ and
$X_1^{\text{in}}$.
The expressions of $\langle I,Y \rangle$ and of $\langle
\Ld_1,Y\rangle$ are linear in the variables $\beta,\gamma,\tau$, but
the condition that $Y$ is positive semidefinite is slightly more
complicated because $\Lu_1(X)$ does not commute with
$\Id_{\text{out}}$ and $\Id_{\text{in}}$. In fact, this condition
leads to quadratic constraints, as it will become clear if we 
write the matrices by blocks according to   $\binom{V}{2}
  =X_1^{\text{in}}\cup X_1^{\text{out}}$. It is easy to verify that
\begin{equation*}
\Lu_1(X)=\begin{pmatrix} m\Id & -M\\-M^T & 2m\Id-N \end{pmatrix},\quad
M^TM=mN-2J
\end{equation*}
and that $N$ has two non zero eigenvalues: $2m$, with multiplicity $1$
and eigenvector the all-one vector, and $m$, with multiplicity  $2(m-1)$.
Then, by the Schur complement lemma, the condition 
\begin{equation*}
\beta\Lu_1(X) +\gamma \Id_{\text{out}} +\tau \Id_{\text{in}}= 
\begin{pmatrix} (m\beta+\tau)\Id & -\beta M\\
\beta M^T & (2m\beta +\gamma)\Id -\beta N
\end{pmatrix}\succeq 0
\end{equation*}
leads to quadratic inequalities. It is a bit technical but not
difficult to see that an optimal solution satisfies $\gamma=\tau$,
and finally that it is
\begin{equation*}
Y=\frac{-1}{m^2(m+1)} \Lu_1(X) +\frac{2}{m(m+1)}\Id,
\end{equation*}
leading to the optimal value $\langle \Ld_1,Y\rangle = (8m-4)/(m+1)$.
\end{proof}

\section{Chromatic numbers}\label{sec:chromatic}

Let us first review the case of graphs. For a graph $G$, the \emph{clique number}
$\omega(G)=\alpha(\overline{G})$ and the chromatic number $\chi(G)$
are related by the obvious inequality $\alpha(\overline{G})\leq
\chi(G)$, and the theta number $\vartheta(\overline{G})$ lies in between
these numbers (\cite[Lemma 3, Corollary 3]{Lov}):
\begin{equation}\label{eq1}
\alpha(\overline{G})\leq \vartheta(\overline{G})\leq \chi(G).
\end{equation}
Moreover, the inequality $\vartheta(\overline{G})\leq \chi(G)$ is always
at least as strong as the inequality
$n/\vartheta(G)\leq \chi(G)$; indeed, we know that
$n\leq \vartheta(G)\vartheta(\overline{G})$ from \cite[Corollary
2]{Lov}.

Let us consider the situation for pure $k$-dimensional simplicial
complexes.  By analogy with graphs, the chromatic number $\chi(X)$ of a complex $X$,
is usually defined to be the least number of colors needed to color the
vertices of $X$ such that no $k$-face is monochromatic. 
We remark that for the complete $k$-complex $K_n^k$, the
color classes of an admissible coloring  cannot have more than
$k$ elements,  and consequently that $\chi(K_n^k)=\lceil n/k
\rceil$. So, for all $k$-dimensional complexes $X$, we have
$\alpha(\overline{X})\leq k\chi(X)$. Given 
that we have defined a generalization of the theta number to
$k$-complexes, that satisfies $\alpha(\overline{X})\leq
\vartheta_k(\overline{X})$, it
is natural to wonder if the inequality 
\begin{equation}\label{eq1X}
\vartheta_k(\overline{X})\leq k\chi(X).
\end{equation}
is also satisfied. Unfortunately, this is not true in general.
Indeed, from
the results of Section \ref{sec:families},  one can see that \eqref{eq1X}
is satisfied for the complete tripartite complex and for its complement, but fails for the complete bipartite
complex $K_{m,m}^2$, for which
$\vartheta_2(\overline{K_{m,m}^2})=(8m-4)/(m+1)$ (Proposition
\ref{prop:bipartite}) while $\chi(K_{m,m}^2)=2$. 

Let us now see if we can modify the definition of
the chromatic number of a simplicial complex, so that it
fits better with our theta number. To achieve this, we will adapt the
concept of graph homomorphisms to simplicial complexes. Indeed, a nice way
to understand the notions of chromatic and clique numbers of
graphs is through their connection to graph homomorphisms, as we will recall now.

A homomorphism $f$ from a graph $G$ to a graph
$G'$ is a mapping from the vertices of $G$ to the vertices of $G'$
that sends an edge of $G$ to an edge of $G'$. Then, the clique number and the chromatic number have the following interpretations: the clique number
$\omega(G)$ is the largest number $\ell$ such that there is a
homomorphism from the complete graph $K_\ell$ to $G$, and similarly
 $\chi(G)$ is the smallest number $\ell$ such that there is a
homomorphism from $G$ to $K_\ell$. Moreover, one can prove that, if
there is a homomorphism from $G$ to $G'$, then
$\vartheta(\overline{G})\leq \vartheta(\overline{G'})$. The
combination of these properties immediately leads to \eqref{eq1}.

In order to follow a similar approach for simplicial complexes, 
we introduce an ad-hoc  notion of homomorphism. 


\begin{definition} Let $X$ and $X'$ be two pure $k$-dimensional  simplicial complexes. A homomorphism $f$ from $X$ to $X'$  is a 
mapping $f: X_{k-1}\to X'_{k-1}$ with the following property:
There exist orientations of $X$ and $X'$ such that for every $H\in X_k$, there is $H'\in X'_k$ such that
\begin{enumerate}
\item  $\{f(F)\ : F\in X_{k-1},\ F\subset H\}=\{F'\in X'_{k-1}\ :\ F'\subset H'\}$,
\item  $[H':f(F)]=[H:F]$ for all $F\in X_{k-1}$ with $F\subset H$.
\end{enumerate}
\end{definition}

We note that this definition coincides in dimension $1$  with the
usual notion of  a graph homomorphism as one can always find suitable orientations. 

\begin{remark} In this definition, it is important to understand that a
  homomorphism $f$ may not necessarily be induced by a global mapping $f_0$
  between the vertices, i.e., it may be the case that there is no
  mapping $f_0:X_0\to X'_0$ such that $f(F)=f_0(F)$ for all $F \in X_{k-1}$. As an example consider the $2$-dimensional complex $X$ depicted in Figure~\ref{FIGUREexample1}. 
  
  Furthermore, condition (2) is not automatically fulfilled. The $2$-dimensional complex $X$ depicted in Figure~\ref{FIGUREexample2} possesses a map $f: X_{1}\to (K_3^2)_{1}$ satisfying condition (1) but there is no homomorphism  from $X$ to $K_3^2$. 
\end{remark}

\begin{figure}[htbp]
  \centering\includegraphics[scale=.5]{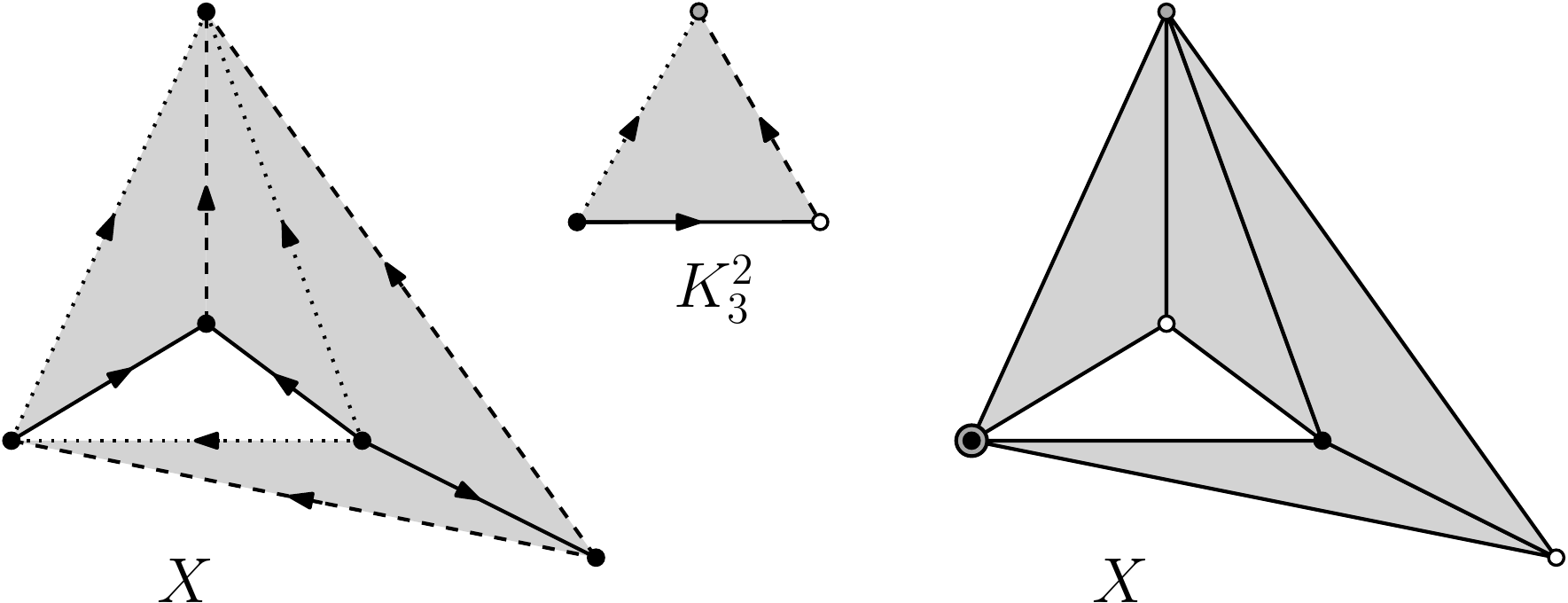}
  \caption{The homomorphism of $X$ to $K_3^2$ is not induced by a vertex map.}\label{FIGUREexample1}
\end{figure}

\begin{figure}[htbp]
  \centering\includegraphics[scale=.5]{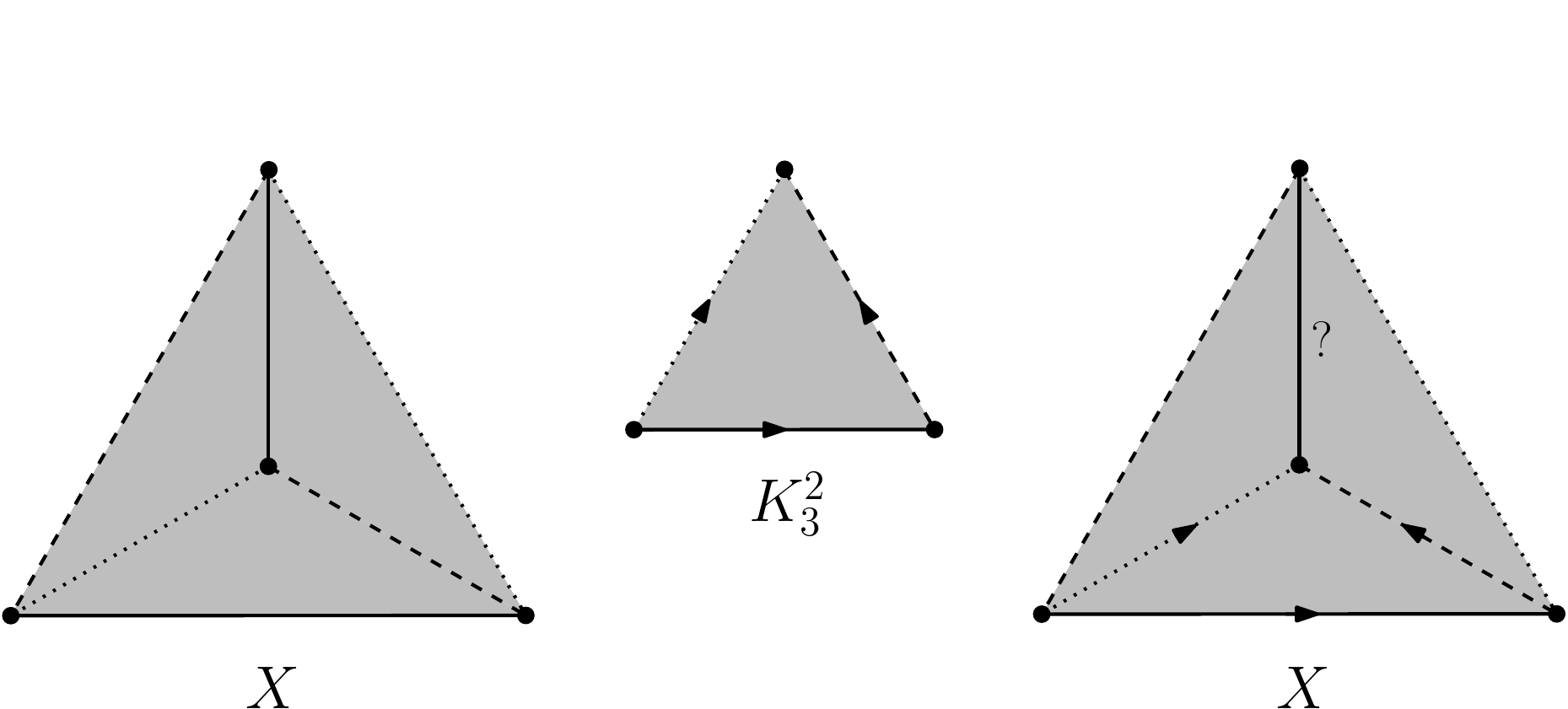}
  \caption{A complex $X$ with no homomorphism to $K_3^2$}\label{FIGUREexample2}
\end{figure}

\begin{proposition} Let $X$ and $X'$ be two pure $k$-dimensional
  simplicial complexes, and let $f$ be a homomorphism  from $X$ to
  $X'$.
Then, 
\begin{equation}\label{eq:theta hom}
\vartheta_k(\overline{X})\leq \vartheta_k(\overline{X'}).
\end{equation}
\end{proposition}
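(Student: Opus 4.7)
The plan is to take any feasible $Y$ for the primal SDP \eqref{eq:thetak primal} defining $\vartheta_k(\overline{X})$ and push it forward along $f$ to a matrix $\tilde Y'$ indexed by $\binom{V'}{k}$, where $V'$ denotes the vertex set of $X'$. This mirrors the classical proof for graphs, in which a homomorphism $G\to G'$ gives a straightforward correspondence of orthonormal representations. The situation here is more delicate because $f$ need not be injective on $X_{k-1}$, and the SDP carries extra constraints for pairs $(F, F_1)$ with $|F\cup F_1|\geq k+2$; for this reason, the pushforward is the correct direction, while the naive pullback $Y_{F, F_1}:=Y'_{f(F), f(F_1)}$ can easily violate those constraints whenever $f$ identifies two $(k-1)$-faces of $X$ lying in no common $k$-face.

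Set $\tilde Y':=PYP^{T}$, where $P\in\R^{\binom{V'}{k}\times\binom{V}{k}}$ has $P_{F', F}=1$ iff $F\in X_{k-1}$ and $f(F)=F'$, so
\[
\tilde Y'_{F', F'_1}=\sum_{F\in f^{-1}(F')}\sum_{F_1\in f^{-1}(F'_1)}Y_{F, F_1}.
\]
Then $\tilde Y'\succeq 0$ is immediate. The key implication needed for feasibility is
\[
F\cup F_1\in X_k\ \Longrightarrow\ f(F)\cup f(F_1)=f(F\cup F_1)\in X'_k,
\]
which is a direct consequence of condition (1) together with injectivity of $f$ on $\binom{H}{k}$ for $H\in X_k$ (also forced by condition (1)). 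Its contrapositive ensures $\tilde Y'_{F', F'_1}=0$ whenever $F'\cup F'_1\in\overline{X'}_k$ or $|F'\cup F'_1|\geq k+2$, because every summand then corresponds to a pair $(F, F_1)$ in $X$ already killed by the feasibility of $Y$. Using $\epsilon_{F, F_1}=-[H:F][H:F_1]$ for $F\cup F_1=H\in X_k$ together with condition (2) gives $\epsilon_{F, F_1}=\epsilon_{f(F), f(F_1)}$ on such pairs, from which both the $\epsilon$-symmetry of $\tilde Y'$ and the identity
\[
\epsilon_{F', F'_1}\tilde Y'_{F', F'_1}=\sum_{H\in f^{-1}(H')} y_H\qquad (F'\cup F'_1=H'\in X'_k)
\]
follow, writing $y_H:=\epsilon_{F, F_1}Y_{F, F_1}$ for any $F, F_1\subset H$.

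To conclude, I would use the reformulation
\[
\langle\Ld_{k-1}, Y\rangle=k\,\trace(Y)+k(k+1)\sum_{H\in X_k}y_H
\]
and the analogous identity for $\tilde Y'$. Since distinct $(k-1)$-faces of $X$ lying in a common fiber of $f$ cannot share a $k$-face, the corresponding entries of $Y$ vanish, so $\trace(\tilde Y')=\sum_{F\in X_{k-1}}Y_{F, F}\leq\trace(Y)=1$. Combining the two objective identities with $\sum_{H'\in X'_k}y'_{H'}=\sum_{H\in X_k}y_H$ yields
\[
\langle\Ld_{k-1},\tilde Y'\rangle=k\,\trace(\tilde Y')+\langle\Ld_{k-1}, Y\rangle-k.
\]
If $\trace(\tilde Y')>0$, then $\tilde Y'/\trace(\tilde Y')$ is feasible for $\vartheta_k(\overline{X'})$, giving $\langle\Ld_{k-1},\tilde Y'\rangle\leq\trace(\tilde Y')\cdot\vartheta_k(\overline{X'})$; combined with $\vartheta_k(\overline{X'})\geq k$ from Remark~\ref{rk:thetak}, this rearranges to $\langle\Ld_{k-1}, Y\rangle\leq\vartheta_k(\overline{X'})$. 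The degenerate case $\trace(\tilde Y')=0$ forces $\tilde Y'\equiv 0$, whence the identity above gives $\langle\Ld_{k-1}, Y\rangle=k\leq\vartheta_k(\overline{X'})$ directly.

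The main obstacle, in my view, is avoiding the temptation of a direct pullback. The pushforward is what makes the contrapositive of the homomorphism property usable (turning a forward implication on $k$-faces into a backward implication sufficient to annihilate the offending entries), while the slack $\vartheta_k(\overline{X'})\geq k$ is precisely what absorbs the trace defect introduced by $f$ being non-injective.
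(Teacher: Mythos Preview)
Your argument is correct and uses the same pushforward construction as the paper: set $Y'_{K,K'}=\sum_{f(F)=K,\,f(F')=K'}Y_{F,F'}$ and verify that $Y'$ is feasible for $\vartheta_k(\overline{X'})$ with the same objective value. The one place you diverge is the handling of the trace. The paper takes $Y$ \emph{optimal} and observes that optimality forces $Y_{F,F}=0$ whenever $F\notin X_{k-1}$ (since such a row is otherwise zero), so that $\trace(Y')=\sum_{F\in X_{k-1}}Y_{F,F}=\trace(Y)=1$ exactly and $\langle\Ld_{k-1},Y'\rangle=\langle\Ld_{k-1},Y\rangle$ with no correction term. You instead work with an arbitrary feasible $Y$, accept $\trace(\tilde Y')\leq 1$, and absorb the resulting defect $k(1-\trace(\tilde Y'))$ using the bound $\vartheta_k(\overline{X'})\geq k$ from Remark~\ref{rk:thetak}. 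Both routes are valid; yours trades the optimality observation for a rescaling step, and has the mild advantage of applying to every feasible $Y$ rather than only to optimizers.
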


\begin{proof} Our strategy will be to start with an optimal solution $Y$ of the primal
  formulation \eqref{eq:thetak primal} of $\vartheta_k(\overline{X})$,
  from which  we construct a  matrix $Y'$, feasible for
  $\vartheta_k(\overline{X'})$, and having the same objective value as $Y$.

So, let $Y$ be primal optimal for the semidefinite program defining $\vartheta_k(\overline{X})$. 
We remark that, if $F\notin X_{k-1}$, then, for all $F'\neq F$, $F\cup F'\notin X_k$, and so $Y_{F,F'}=0$.
As a consequence, by the optimality of $Y$, we have $Y_{F,F}=0$. 

For $(K,K')\in X_{k-1}^2$, we set
\begin{equation*}
Y'_{K,K'}=\sum_{\substack{ (F,F')\in X_{k-1}^2\\f(F)=K ,\, f(F')=K'} }Y_{F,F'}
\end{equation*}
where the sum is zero if $K$ or $K'$ does not belong to the image of $f$. 

We have $\trace(Y')=\sum_{K\in \binom{V}{k}} Y'_{K,K}=\sum_{F\in X_{k-1}} Y_{F,F}=\trace(Y)$.

By the property 1) of homomorphisms, if $K\neq K'$ and $K\cup K'$ is not an element
of $X'_k$, and if $K=f(F)$ and  $K'=f(F')$, then $F\cup F'$ cannot
belong to $X_k$, and so $Y_{F,F'}=0$. So, we have that
$Y'_{K,K'}=0$. 

Thanks to property 2), if $K\cup
K'\in X'_k$ and $K\cup K'=K''\cup K^\dag$, the required condition that 
$\epsilon_{K,K'}Y'_{K,K'}=\epsilon_{K'',K\dag}Y'_{K'',K\dag}$ holds. So, we have proved that $Y'$ is primal feasible for $\vartheta_k(\overline{X'})$.

It remains to analyze the objective value $\langle \Ld_{k-1}, Y'\rangle$.
We have 
\begin{equation*}
\langle \Ld_{k-1}, Y'\rangle=k\trace(Y')+\sum_{K,K'\ :\ K\cup K'\in X'_k} \epsilon_{K,K'}Y'_{K,K'}.
\end{equation*}
But 
\begin{align*}
\sum_{\substack{K,K'\\ K\cup K'\in X'_k}} \epsilon_{K,K'}Y'_{K,K'}&=
\sum_{\substack{K,K'\\ K\cup K'\in X'_k}} \epsilon_{K,K'}\sum_{\substack{ (F,F')\in X_{k-1}^2\\f(F)=K ,\, f(F')=K'} } Y_{F,F'}\\
&=\sum_{\substack{(F,F')\in X_{k-1}^2\\ F\cup F'\in X_k}} \epsilon_{F,F'}Y_{F,F'}
\end{align*}
where in the last equality we ignore the terms corresponding to $F\cup F'\notin X_k$ because they are equal to zero, and  we apply the property 2). It follows that
$\langle \Ld_{k-1}, Y'\rangle=\langle \Ld_{k-1}, Y\rangle$. 
\end{proof}

\begin{definition}
Let $X$ be a pure $k$-dimensional simplicial complex. Let $\chi_k(X)$ denote the smallest number
$\ell$ such that there exists a homomorphism from $X$ to the complete $k$-complex $K_{\ell}^k$.
\end{definition}

It is not hard to see that $\chi_k(X) \leq \chi(X_1)$ holds for any pure simplicial complex $X$ as a vertex coloring with $\ell$ colors that is a proper graph coloring for $X_1$ gives rise to a homomorphism from $X$ to $K_\ell^k$. The complex $X$ depicted in Figure~\ref{FIGUREexample1} serves as an example that the three notions of chromatic numbers considered here differ. It has $\chi_2(X)=3$, $\chi(X) = 2$ and $\chi(X_1)=4$.

\begin{proposition}
We have
\begin{equation*}
\vartheta_k(\overline{X})\leq \chi_k(X).
\end{equation*}
\end{proposition}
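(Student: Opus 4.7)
The plan is to derive this bound as an immediate consequence of two results already in hand: the monotonicity of $\vartheta_k(\overline{\cdot})$ under simplicial homomorphisms (the preceding proposition, inequality \eqref{eq:theta hom}), and the computation of $\vartheta_k$ on the empty $k$-complex (the second observation in Remark~\ref{rk:thetak}). So in fact no new SDP construction is required — it is a direct chain of two inequalities/equalities.

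First, I would set $\ell := \chi_k(X)$. By the very definition of $\chi_k$, there exists a homomorphism $f\colon X \to K_\ell^k$. Applying inequality \eqref{eq:theta hom} to this homomorphism gives
\[
\vartheta_k(\overline{X}) \;\leq\; \vartheta_k\bigl(\overline{K_\ell^k}\bigr).
\]

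Next I would identify the right-hand side. Since $K_\ell^k$ contains every $(k+1)$-subset of $[\ell]$ as a $k$-face, the complementary complex $\overline{K_\ell^k}$ has no $k$-faces whatsoever; that is, it is the empty $k$-complex on the vertex set $[\ell]$. By the second observation in Remark~\ref{rk:thetak}, the theta number of the empty $k$-complex on an $\ell$-element vertex set equals $\ell$, so
\[
\vartheta_k\bigl(\overline{K_\ell^k}\bigr) \;=\; \ell \;=\; \chi_k(X),
\]
and chaining the two relations yields the claim.

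There is essentially no obstacle to overcome — everything is built in. The only point that requires minor care is to keep track of the ambient vertex set when passing to the complement: the complement $\overline{K_\ell^k}$ is taken with respect to the vertex set $[\ell]$ of the target complex $K_\ell^k$ (not with respect to the vertex set of $X$), which is exactly what makes the right-hand side of the homomorphism inequality evaluate to $\ell$ rather than to $|V(X)|$.
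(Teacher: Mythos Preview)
Your proof is correct and follows exactly the same approach as the paper: apply the homomorphism inequality \eqref{eq:theta hom} to a homomorphism $X\to K_\ell^k$ with $\ell=\chi_k(X)$, and then use Remark~\ref{rk:thetak} to evaluate $\vartheta_k(\overline{K_\ell^k})=\ell$. The paper's proof is just a one-sentence version of what you wrote.
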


\begin{proof}
If there is $f: X\to K_{\ell}^k$ then applying \eqref{eq:theta hom} leads to 
$\vartheta_k(\overline{X})\leq \vartheta_k(\overline{K_{\ell}^k})=\ell$ (see Remark \ref{rk:thetak}).
\end{proof}

\begin{example}\label{ex:chromnum}
Consider the complex $X=K_{m,m,m}^2$ defined in Section \ref{sec:families}. Clearly, $\chi_2(X)=\chi(X_1)=3$, so we have $3=\alpha(\overline{X}) \leq \vartheta_2(\overline{X})\leq \chi_2(X)=3$ and hence $\vartheta_2(\overline{X})=3$.
\end{example}

A $k$-dimensional subcomplex $C$ of a pure $k$-dimensional simplicial complex $X$ is a \emph{connected component} of $X$ if for every $(k-1)$-face $F$ of $C$ any $k$-face of $X$ that contains $F$ is also in $C$. Note that this condition does not need to hold for lower dimensional simplices, so two distinct connected components can, e.g., share a common vertex. Further observe that the connected components of $X$ correspond to the connected components of the graph that has the $k$-faces of $X$ as vertices with two vertices forming an edge if the correponding $k$-faces intersect in a common $(k-1)$-face.

As different connected components do not share $(k-1)$-faces, the inequality $\chi_k(X) \leq \chi(X_1)$ can actually be extended to the connected components of $X$.

\begin{proposition} Let $\mathcal{C}$ be the collection of connected components of $X$. Then
\[\chi_k(X) \leq \max_{C \in\mathcal{C}}\chi(C_1).\]
\end{proposition}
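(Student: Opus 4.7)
The strategy is to set $\ell = \max_{C \in \mathcal{C}} \chi(C_1)$ and build a homomorphism $f \colon X \to K_\ell^k$ by pasting together vertex colorings of the components. The key enabling observation is that the components partition both $X_k$ and $X_{k-1}$: the partition of $X_k$ is built into the definition, and purity of $X$ forces every $(k-1)$-face to lie in some $k$-face, which in turn determines a unique component. So $f$ can be defined component by component with no conflicts, even though a single vertex may belong to more than one component.

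For each $C \in \mathcal{C}$ the plan is to fix a proper vertex coloring $g_C \colon C_0 \to [\ell]$ of the graph $C_1$, padding with unused colors if $\chi(C_1) < \ell$, and then to set $f(F) := g_C(F) = \{g_C(v) : v \in F\}$ for every $F \in C_{k-1}$. This is a $k$-element subset of $[\ell]$ because any two vertices of a $(k-1)$-face of $C$ are joined by an edge of $C_1$ and therefore receive distinct colors. To handle orientations, I would equip $K_\ell^k$ with the standard orientation inherited from $1 < 2 < \cdots < \ell$ and, for each face of dimension $k-1$ or $k$ belonging to a component $C$, orient it by listing its vertices in order of increasing $g_C$-value (which is well-defined because the values are pairwise distinct on any such face).

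With these choices, condition (1) of the homomorphism definition, namely $\{f(F) : F \subset H,\ F \in X_{k-1}\} = \{F' \subset g_C(H) : F' \in (K_\ell^k)_{k-1}\}$ for $H \in C_k$, is immediate from the construction, and condition (2), $[g_C(H) : g_C(F)] = [H:F]$, follows because under the chosen orientations both sides equal $(-1)^j$, where $j$ is the rank of the omitted vertex in the increasing $g_C$-order of $H$. The only subtle point is the consistency of orientations when a vertex lies in several components: but orientations are purely local data attached to individual faces, and each face lies in a unique component, so the componentwise rule is unambiguous. Once $f$ is shown to be a homomorphism from $X$ to $K_\ell^k$, the definition of $\chi_k$ yields $\chi_k(X) \leq \ell = \max_{C \in \mathcal{C}} \chi(C_1)$.
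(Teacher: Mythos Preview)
Your proof is correct and follows exactly the approach the paper has in mind: the proposition is stated without proof, but the preceding sentence (``As different connected components do not share $(k-1)$-faces, the inequality $\chi_k(X) \leq \chi(X_1)$ can actually be extended to the connected components of $X$'') signals precisely the argument you carry out---use that $X_{k-1}$ and $X_k$ are partitioned by the components, pick a proper coloring of each $C_1$ with at most $\ell$ colors, and glue the resulting local homomorphisms to $K_\ell^k$. Your handling of the orientations (ordering vertices of each $(k-1)$- and $k$-face by their $g_C$-color) is the natural way to make condition (2) transparent, and your remark that only faces of dimension $k-1$ and $k$ need orientations, each lying in a unique component, correctly disposes of the only potential ambiguity.
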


It is well-known that a $d$-regular graph $G$ has a bipartite connected component if and only if the largest eigenvalue of the Laplacian is $2d$. 
In \cite{HJ} Horak and Jost present a combinatorial criterion that can be considered as a higher-dimensional analog of this:
They show that for a $d$-regular $k$-complex $X$ the largest eigenvalue of the Laplacian $\Lu_{k-1}(X)$ is $(k+1)d$ if and only if there is a connected component $C$ of $X$ and an orientation of the $k$-faces of $X$ such that 
$[H:F] = [H':F]$ for all $F\in C_{k-1}$, $F \subset H,H'$.
Note that for a connected graph the existence of such an orientation is equivalent to bipartiteness.

If a $k$-dimensional simplicial complex $X$ has chromatic number $\chi_k(X)=k+1$, this guarantees the existence of such an orientation. Hence, we have the following observation.
\begin{proposition}
Let $X$ be a $d$-regular $k$-dimensional simplicial complex. If $\chi_k(X)=k+1$, then the maximal eigenvalue of the up-Laplacian $\Lu_{k-1}$ is $(k+1)d$.
\end{proposition}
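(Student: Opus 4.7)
The plan is to reduce the statement directly to the Horak--Jost criterion recalled just before the proposition: it suffices to exhibit an orientation of the $k$-faces of $X$ such that, on some connected component $C$, the incidence numbers $[H:F]$ and $[H':F]$ agree whenever $F \in C_{k-1}$ is contained in two $k$-faces $H$ and $H'$ of $X$. Once such an orientation is produced, the theorem follows without any further work.

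The key observation is that the target complex $K_{k+1}^k$ has exactly one $k$-dimensional face, namely $H_0 := [k+1]$ itself. From the hypothesis $\chi_k(X) = k+1$ we obtain a homomorphism $f : X \to K_{k+1}^k$. Unpacking the definition, there exist orientations of $X$ and of $K_{k+1}^k$ such that for each $H \in X_k$ some $H' \in (K_{k+1}^k)_k$ satisfies conditions (1) and (2); but $H_0$ is the only option, so $H' = H_0$ for every $H \in X_k$. Condition (1) then forces $f$ to restrict to a bijection between the $(k-1)$-faces of $H$ and those of $H_0$, while condition (2) gives
\[
[H:F] \;=\; [H_0:f(F)]
\]
for every $F \in X_{k-1}$ with $F \subset H$.

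The conclusion crystallizes once we combine this identity across different $k$-faces containing the same $(k-1)$-face. If $F \in X_{k-1}$ lies in two $k$-faces $H$ and $H'$ of $X$, applying the displayed identity to each gives $[H:F] = [H_0:f(F)] = [H':F]$. Thus the orientation of $X$ supplied by the homomorphism already satisfies the Horak--Jost compatibility condition on all of $X_{k-1}$, and in particular on the $(k-1)$-faces of every connected component of $X$. By the Horak--Jost criterion, the maximal eigenvalue of $\Lu_{k-1}$ is $(k+1)d$, which is the desired claim.

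No serious obstacle is foreseen. The only subtle point is recognizing that a $k$-complex on $k+1$ vertices is small enough to have a unique $k$-face; this is what forces every $H'$ appearing in the homomorphism definition to coincide with $H_0$, and hence forces the consistency of incidences across distinct $k$-faces sharing a $(k-1)$-face. The regularity hypothesis is used only through the invocation of the Horak--Jost criterion at the end.
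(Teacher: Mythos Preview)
Your argument is correct and is exactly the approach the paper has in mind: the paper merely asserts that $\chi_k(X)=k+1$ ``guarantees the existence of such an orientation'' and then states the proposition as an observation, while you spell out why---namely that $K_{k+1}^k$ has a single $k$-face $H_0$, so the homomorphism condition $[H:F]=[H_0:f(F)]$ forces $[H:F]=[H':F]$ whenever $F\subset H\cap H'$, which is precisely the Horak--Jost criterion.
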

We remark that these results extend to arbitrary complexes for a normalized version of the Laplacian that we do not study here.


\section{A hierarchy of semidefinite relaxations for the independence
  number of a $k$-simplicial complex}\label{sec:thetal}

In this section, $X$ is again a pure $k$-dimensional simplicial complex. 
We consider a straightforward generalization of 
$\vartheta_k(X)$ that leads to higher order theta numbers
$\vartheta_{\ell}(X)$ for $\ell>k$. We will see that all these numbers
provide upper bounds of $\alpha(X)$, until $\ell=\alpha(X)$, where
$\vartheta_{\alpha(X)}=\alpha(X)$.
Finally, we will modify this sequence of theta numbers in order to get a decreasing sequence. 

 It will be convenient to denote
by $\Ind_{i}$ the set of independent sets of dimension $i$. We make
the remark that $\Ind:=\Ind_{-1}\cup\dots\cup\Ind_{\alpha(X)-1}$ is a
simplicial complex, the \emph{independence complex} of $X$, and that it has complete $(k-1)$-skeleton,
i.e., $\Ind_{k-1}=\binom{V}{k}$. 
For $\ell>k$, the matrices involved in the program defining
$\vartheta_{\ell}(X)$ are indexed by $\Ind_{\ell-1}$.  We define, 
for $k\leq \ell \leq \alpha(X)$:

\begin{equation}\label{eq:thetal primal}
\begin{array}{rl}
\vartheta_{\ell}(X)=\sup\big\{ \langle \Ld_{\ell-1}(\Ind),Y\rangle \ :&Y\in
                                                             \R^{\Ind_{\ell-1}\times
                                                                         \Ind_{\ell-1}},\
                                                             Y\succeq
0, \ \langle I,Y\rangle=1, \\
&Y_{F,F'}=0 \text{ if } F\cup F'   \in \binom{V}{\ell+1}\setminus \Ind_{\ell},\\
&Y_{F,F'}=0 \text{ if }|F\cup F'|\geq \ell+2,\\
&\epsilon_{F,F'} Y_{F,F'}=\epsilon_{F'',F^\dag}Y_{F'',F^\dag} \text{ if } F\cup
  F'=F''\cup F^\dag\,\big\}
\end{array}
\end{equation}
and its dual formulation:
\begin{equation}\label{eq:thetal dual}
\begin{array}{rl}
\vartheta_{\ell}(X)=\inf \big\{\ \lambda_{\max}(Z) \ :\ &  Z=\Ld_{\ell-1}(\Ind)+ T, \\
&T_{F,F}=0 \text{ for all }F\in \Ind_{\ell-1},\\
&\sum_{ F\cup F'=H} \epsilon_{F,F'}T_{F,F'}=0 \ \text{ if
  } H\in \Ind_\ell\big\}
\end{array}
\end{equation}
The above definition matches for $\ell=k$ with that of
$\vartheta_k(X)$. Both primal and dual programs are strictly feasible:
$Y=I/\la I,I\ra $ and respectively $T=0$ give rise to strictly feasible solutions. We note
that, if $\ell=\alpha(X)$, the feasible matrices of the primal program
are diagonal matrices 
and hence $\vartheta_{\ell}(X)=\ell=\alpha(X)$. We have 
\begin{proposition}\label{prop:thetal}
\begin{equation*}
\alpha(X)\leq \vartheta_{\ell}(X).
\end{equation*}
\end{proposition}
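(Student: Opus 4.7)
The plan is to mimic the proof of Proposition \ref{prop:thetak} by constructing, from a maximum independent set, an explicit feasible matrix for \eqref{eq:thetal primal} whose objective value equals $\alpha(X)$.

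Let $S$ be an independent set with $|S|=\alpha(X)$. Since $\ell\leq \alpha(X)$, every $\ell$-subset of $S$ lies in $\Ind_{\ell-1}$ and every $(\ell+1)$-subset of $S$ lies in $\Ind_{\ell}$. Following the template of \eqref{e26}, I would define a matrix $Y^S$ indexed by $\Ind_{\ell-1}$ by
\begin{equation*}
(Y^S)_{F,F'}=\begin{cases} 0 & \text{if } F\cup F'\nsubseteq S,\\ (\Ld_{\ell-1})_{F,F'} & \text{otherwise,}\end{cases}
\end{equation*}
where $\Ld_{\ell-1}$ denotes the down-Laplacian of the complete $\ell$-complex on $V$. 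As in the $\ell=k$ case, $Y^S$ is a Gram matrix, namely $Y^S=\delta_{\binom{S}{\ell}}\delta_{\binom{S}{\ell}}^T$ (with the obvious extension of the matrix of the boundary operator by zeros on faces not contained in $S$), so $Y^S\succeq 0$.

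Next I would check the three linear constraints of \eqref{eq:thetal primal}. For $F,F'\in \Ind_{\ell-1}$ with $Y^S_{F,F'}\neq 0$ we have $F\cup F'\subseteq S$, hence $F\cup F'$ is an independent subset; in particular it cannot belong to $\binom{V}{\ell+1}\setminus\Ind_{\ell}$, and $|F\cup F'|\leq \ell+1$ by the structure of the down-Laplacian (its off-diagonal entries vanish unless $|F\cap F'|=\ell-1$). The symmetry condition $\epsilon_{F,F'}Y_{F,F'}=\epsilon_{F'',F^\dag}Y_{F'',F^\dag}$ whenever $F\cup F'=F''\cup F^\dag$ is inherited directly from the corresponding identity for the entries of $\Ld_{\ell-1}$.

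Finally I would compute the two inner products. The diagonal entries of $Y^S$ equal $\ell$ on the $\binom{|S|}{\ell}$ subsets of $S$ of size $\ell$ and $0$ elsewhere, giving $\langle I,Y^S\rangle=\ell\binom{|S|}{\ell}$. For the objective, the key point is that the restriction of $\Ld_{\ell-1}(\Ind)$ to the principal submatrix indexed by $\binom{S}{\ell}\subset \Ind_{\ell-1}$ coincides with the corresponding submatrix of $\Ld_{\ell-1}$, because both the diagonal value $\ell$ and the signs $\epsilon_{F,F'}$ depend only on the combinatorics of the $\ell$-subsets. A count exactly as in Proposition \ref{prop:thetak} then yields
\begin{equation*}
\langle \Ld_{\ell-1}(\Ind),Y^S\rangle=\ell^2\binom{|S|}{\ell}+(\ell+1)\ell\binom{|S|}{\ell+1}=\ell\binom{|S|}{\ell}|S|.
\end{equation*}
Scaling by $\ell^{-1}\binom{|S|}{\ell}^{-1}$ makes the matrix feasible with objective value $|S|=\alpha(X)$, proving the inequality. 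The only mildly subtle point, and the one I would make sure to justify carefully, is the identification of $\Ld_{\ell-1}(\Ind)$ with $\Ld_{\ell-1}$ on the relevant principal submatrix; everything else is a direct transposition of the $\ell=k$ argument.
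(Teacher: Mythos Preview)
Your proposal is correct and follows essentially the same approach as the paper. The paper defines $Y^S$ using $\Ld_{\ell-1}(\Ind)$ rather than the complete-complex $\Ld_{\ell-1}$, but, as you yourself point out, these agree on the relevant principal submatrix since every subset of an independent set is independent; beyond this cosmetic difference your argument is the paper's argument, carried out in somewhat more detail.
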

\begin{proof} The same proof as the one of Proposition
  \ref{prop:thetak} works. For an independent set $S$ such that
  $|S|\geq \ell$, we define  $Y^S\in \R^{\Ind_{\ell-1}\times
    \Ind_{\ell-1}}$ by
 \begin{equation*}
(Y^S)_{F,F'} = \left\{\begin{array}{ll}
0 &\text { if } F \cup F' \nsubseteq S\\
(\Ld_{\ell-1}(\Ind))_{F,F'} &\text{ otherwise}.
\end{array}
\right.
\end{equation*}
It is then easy to verify,  as every subset of an independent set $S$ is also an independent
  set, that $\ell^{-1}\binom{|S|}{\ell}^{-1}Y^S $
is feasible for the primal program \eqref{eq:thetal primal} and that
its objective value is equal to $|S|$. 
\end{proof}

However, it is not clear that the sequence
$(\vartheta_{\ell}(X))_{k\leq \ell\leq \alpha(X)}$ is decreasing,
because the constraints on the $\ell$-sets involved in
$\vartheta_{\ell-1}(X)$ do not occur explicitly in
$\vartheta_\ell(X)$. We now define a  variant of
$\vartheta_\ell(X)$ that provides a decreasing sequence of upper
bounds of $\alpha(X)$.

To start with, we note that, if  a matrix $Y$ is feasible for
\eqref{eq:thetal primal}, then the value of $\epsilon_{F,F'}Y_{F,F'}$
for $(F,F')$ such that $|F\cup F'|=\ell+1$ only depends on $F\cup
F'$. So, we can associate to $Y$ a function $y\in \R^{\Ind_{\ell}}$
such that $\epsilon_{F,F'}Y_{F,F'}=y(H)$ if $H=F\cup F'$. If we extend $y$ to
$\Ind_{\ell-1}$ by $y(F):=Y_{F,F}$,  we see that $y$ encodes every nonzero entry of $Y$. Said differently, we have a one to one
correspondence between $\R^{\Ind_{\ell-1}\cup \Ind_{\ell}}$ and the
set 
\begin{equation*}
{\mathcal Y}_{\ell-1} =\big\{ Y\in \R^{\Ind_{\ell-1}\times \Ind_{\ell-1}} \ : 
\begin{array}{ll}
&Y_{F,F'}=0 \text{ if } F\cup F'   \in \binom{V}{\ell+1}\setminus \Ind_{\ell},\\
&Y_{F,F'}=0 \text{ if }|F\cup F'|\geq \ell+2,\\
&\epsilon_{F,F'} Y_{F,F'}=\epsilon_{H,H'}Y_{H,H'} \text{ if } F\cup F'=H\cup H'\
\end{array}
\big\}
\end{equation*}
We record for later use that, if $y \in
\R^{\Ind_{\ell-1}\cup \Ind_{\ell}}$ corresponds to $Y\in {\mathcal
  Y}$ as above, then
\begin{equation}\label{eq:y trace}
\la I, Y\ra= \sum_{F\in \Ind_{\ell-1}} y(F)
\end{equation}
and 
\begin{equation}\label{eq:y objective}
\la \Ld_{\ell-1}(\Ind),Y\ra=\ell\sum_{F\in \Ind_{\ell-1}} y(F)
+\ell(\ell+1)\sum_{H\in \Ind_{\ell} } y(H).
\end{equation}

Now, we introduce, for $\ell\geq 2$, a map $\tau_{\ell-1}: \YY_{\ell-1} \to
\YY_{\ell-2}$. It will be more convenient to define $\tau_{\ell-1}$ on
the corresponding functions $y\in \R^{\Ind_{\ell-1}\cup
  \Ind_{\ell}}$, in the following way: let
\begin{equation*}
\begin{array}{cccc}
\tau_{\ell-1} \ :& \R^{\Ind_{\ell-1}\cup \Ind_{\ell}} &\to
  &\R^{\Ind_{\ell-2}\cup \Ind_{\ell-1}}\\
 & y &\mapsto & \tau_{\ell-1}(y)=z
\end{array}
\end{equation*}
where
\begin{equation*}
\begin{cases}
z(K)=\frac{1}{\ell}\sum_{F\in \Ind_{\ell-1} \,:\, K\subset F} y(F)
\quad \text{if } K\in \Ind_{\ell-2}\\
 z(F)=\frac{1}{\ell(\ell-1)} y(F)+\frac{1}{\ell-1}\sum_{H\in \Ind_{\ell} \, : \, F\subset H} y(H)
\quad \text{if } F\in \Ind_{\ell-1}
\end{cases}
\end{equation*}

We are now in the position to define our strengthening of
$\vartheta_\ell(X)$: Let

\begin{equation}\label{eq:thetal tilde}
\begin{array}{rl}
\hat{\vartheta}_{\ell}(X)=\sup\big\{ \langle \Ld_{\ell-1}(\Ind),Y\rangle \ :&Y\in
                                                             \R^{\Ind_{\ell-1}\times
                                                                         \Ind_{\ell-1}},\
                                                             Y\succeq
0, \ \langle I,Y\rangle=1, \\
& \tau_i\circ\tau_{i+1}\circ\dots\circ \tau_{\ell-1}(Y)\succeq 0\ \text{
  for all } i=1,\dots,\ell-1,\\
&Y_{F,F'}=0 \text{ if } F\cup F'   \in \binom{V}{\ell+1}\setminus \Ind_{\ell},\\
&Y_{F,F'}=0 \text{ if }|F\cup F'|\geq \ell+2,\\
&\epsilon_{F,F'} Y_{F,F'}=\epsilon_{F'',F^\dag}Y_{F'',F^\dag} \text{ if } F\cup
  F'=F''\cup F^\dag\,\big\}.
\end{array}
\end{equation}

\begin{theorem} The numbers $\hat{\vartheta}_\ell(X)$, $k\leq \ell\leq
  \alpha(X)$, satisfy:
\begin{enumerate}
\item $\hat{\vartheta}_\ell(X)\leq  {\vartheta}_\ell(X)$
\item $\alpha(X)=\hat{\vartheta}_{\alpha(X)}(X)\leq   \hat{\vartheta}_{\alpha(X)-1}(X)\leq \dots \leq
  \hat{\vartheta}_k(X)$.
\end{enumerate}
\end{theorem}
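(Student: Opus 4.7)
The inequality (1) is an immediate feasibility comparison: the program defining $\hat{\vartheta}_\ell(X)$ is obtained from the program defining ${\vartheta}_\ell(X)$ by adjoining the extra constraints $\tau_i\circ\dots\circ\tau_{\ell-1}(Y)\succeq 0$ for $i=1,\dots,\ell-1$, and restricting to a smaller feasible set can only decrease the supremum.

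For part (2), the plan is to first establish the monotonicity $\hat{\vartheta}_\ell(X)\leq \hat{\vartheta}_{\ell-1}(X)$ for $k+1\leq \ell\leq\alpha(X)$, and then treat the endpoint $\hat\vartheta_{\alpha(X)}(X)=\alpha(X)$. For the monotonicity, starting from a primal feasible $Y$ for $\hat{\vartheta}_\ell(X)$, the natural candidate is $Z:=\tau_{\ell-1}(Y)\in \YY_{\ell-2}$. All the semidefiniteness conditions demanded of $Z$ in the program for $\hat{\vartheta}_{\ell-1}(X)$ are inherited for free from those imposed on $Y$: $Z\succeq 0$ is precisely the $i=\ell-1$ condition of $\hat{\vartheta}_\ell(X)$, and $\tau_i\circ\dots\circ\tau_{\ell-2}(Z)=\tau_i\circ\dots\circ\tau_{\ell-1}(Y)\succeq 0$ for $i=1,\dots,\ell-2$. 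The heart of the step is then the preservation identity
\begin{equation*}
\langle I, Z\rangle=\langle I, Y\rangle \qquad \text{and} \qquad \langle \Ld_{\ell-2}(\Ind),Z\rangle = \langle \Ld_{\ell-1}(\Ind),Y\rangle,
\end{equation*}
which I expect to follow by a direct swap of summation order in the formulas (\ref{eq:y trace}) and (\ref{eq:y objective}), combined with the two counting facts that each $F\in\Ind_{\ell-1}$ has exactly $\ell$ subfaces in $\Ind_{\ell-2}$ and each $H\in\Ind_{\ell}$ has exactly $\ell+1$ subfaces in $\Ind_{\ell-1}$ (these are consequences of $\Ind$ being closed under taking subsets).

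For the endpoint, the upper bound $\hat{\vartheta}_{\alpha(X)}(X)\leq\alpha(X)$ is a byproduct of part (1): since $\Ind_{\alpha(X)}=\emptyset$, the conditions $Y_{F,F'}=0$ whenever $|F\cup F'|\geq \alpha(X)+1$ force every feasible $Y$ to be diagonal, and the objective collapses to $\alpha(X)\langle I,Y\rangle=\alpha(X)$. For the matching lower bound, I would take $S$ a maximum independent set and use the matrix $Y^S$ built in the proof of Proposition~\ref{prop:thetal}, whose objective value is $|S|=\alpha(X)$ and which is manifestly positive semidefinite (being of the form $\delta\delta^T$). The only point still to verify is feasibility of the $\tau$-tower, which reduces to the scaling identity
\begin{equation*}
\tau_{\ell-1}(Y^S_\ell)\;=\;\frac{|S|-\ell+1}{\ell-1}\,Y^S_{\ell-1},
\end{equation*}
where $Y^S_{\ell'}$ denotes the analogous matrix built from $S$ at level $\ell'$. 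Iterating, the successive $\tau$-images of $Y^S$ remain nonnegative scalar multiples of matrices of the same form $Y^S_{\ell'}$, hence are positive semidefinite.

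The main technical obstacle I anticipate is the clean verification of this scaling identity, which requires carefully tracking the non-uniform normalizations built into the correspondence between $\R^{\Ind_{\ell-1}\cup\Ind_{\ell}}$ and $\YY_{\ell-1}$: the diagonal entries of $Y\in\YY_{\ell-1}$ carry a factor $\ell$ from the down-Laplacian, while the $\epsilon_{F,F'}$-weighted off-diagonal entries carry no such factor, and one must check that the ratios of $\tau_{\ell-1}(Y^S_\ell)$ to $Y^S_{\ell-1}$ arising separately on the $\Ind_{\ell-2}$- and $\Ind_{\ell-1}$-parts genuinely coincide.
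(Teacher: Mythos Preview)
Your proposal is correct and follows essentially the same route as the paper: part (1) by feasibility comparison, monotonicity via $Z=\tau_{\ell-1}(Y)$ together with the preservation identities you state, and the endpoint via the diagonal collapse for the upper bound and the matrix $Y^S$ for the lower bound, with feasibility of the $\tau$-tower reduced to the scaling identity $\tau_{\ell-1}(Y^S_{\ell-1})=\frac{|S|-\ell+1}{\ell-1}\,Y^S_{\ell-2}$ (the paper's Lemma~\ref{lem:taul}, up to your index shift). The anticipated obstacle is milder than you suggest: once one writes $y^S_{\ell-1}(F)=\ell\cdot \mathbf{1}_{F\subset S}$ and $y^S_{\ell-1}(H)=\mathbf{1}_{H\subset S}$, the verification of the scaling identity is a one-line count of how many $F\in\Ind_{\ell-1}$ (respectively $H\in\Ind_\ell$) lie between a given face and $S$, with no delicate normalization bookkeeping required.
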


\begin{proof} 
That $\hat{\vartheta}_\ell(X)\leq  {\vartheta}_\ell(X)$ is clear since we have only added constraints on $Y$ in the
definition of $\hat{\vartheta}_\ell(X)$.

Let $S$ be an independent set, with $|S|\geq \ell$. Let,
  like in the proof of Proposition \ref{prop:thetal}, $Y^S_{\ell-1} \in \R^{\Ind_{\ell-1}\times\Ind_{\ell-1}}$ be defined by:
\begin{equation}\label{def_Y^S}
(Y^S_{\ell-1})_{F,F'} = \begin{cases}
0 \text { if } F \cup F' \nsubseteq S\\
(\Ld_{\ell-1}(\Ind))_{F,F'} \text{ otherwise}.
\end{cases}
\end{equation}
The element $y^S_{\ell-1}\in \R^{\Ind_{\ell-1}\cup \Ind_{\ell}}$
corresponding to $Y^S_{\ell-1}$ is given by: $y^S_{\ell-1}(F)=\ell$ if $F\subset S$, 
$y^S_{\ell-1}(H)=1$ if $H\subset S$, and otherwise $y^S_{\ell-1}$ takes
the value $0$.  We will need the following lemma:

\begin{lemma}\label{lem:taul} We have
\begin{equation*}
\tau_{\ell-1}(y^S_{\ell-1})= \frac{|S|-\ell+1}{\ell-1} y^S_{\ell-2}
\end{equation*}
for $y^S_{\ell-1}$ as defined in \eqref{def_Y^S}.
\end{lemma}

\begin{proof} Let $z:=\tau_{\ell-1}(y^S_{\ell-1})$. Let $K\in
  \Ind_{\ell-2}$. Every subset of $S$ is independent so the number of
  $F\in \Ind_{\ell-1}$ such that $K\subset F\subset S$ is
  $|S|-\ell+1$. So,
\begin{equation*}
z(K)=\frac{1}{\ell}\sum_{F\in \Ind_{\ell-1} \,:\, K\subset F}
y^S_{\ell-1}(F)=|S|-\ell+1.
\end{equation*}
Now let $F\in \Ind_{\ell-1}$. It is clear that, if $F$ is not
contained in $S$, $z(F)=0$. If $F\subset S$,
\begin{align*}
z(F)&=\frac{1}{\ell(\ell-1)} \ell +\frac{1}{\ell-1}\sum_{H\in \Ind_{\ell}\,:\, F\subset
  H\subset S} 1\\
&=\frac{1}{\ell-1} +
  \frac{1}{\ell-1}(|S|-\ell)=\frac{|S|-\ell+1}{\ell-1}.
\end{align*}

\end{proof}

Lemma \ref{lem:taul} shows that $\tau_{\ell}(Y^S_{\ell-1})$ is
positive semidefinite, and so, iteratively, that $\tau_i\circ\tau_{i+1}\circ\dots\circ
\tau_{\ell-1}(Y^S_{\ell-1})$ is positive semidefinite for every $i\leq
\ell-1$. We conclude that
$Y^S_{\ell-1}$ (after a suitable rescaling) is feasible for
$\hat{\vartheta}_\ell(X)$, and consequently that $\alpha(X)\leq
\hat{\vartheta}_\ell(X)$. We have already remarked that $\vartheta_{\alpha(X)}=\alpha(X)$ so
also $\hat{\vartheta}_{\alpha(X)}=\alpha(X)$. 

It remains to prove that the sequence of $\hat{\vartheta}_{\ell}$
is decreasing. For this, we start from an optimal solution $Y$ of
$\hat{\vartheta}_{\ell}$, and we show that $Z:=\tau_{\ell-1}(Y)$ is feasible
for $\hat{\vartheta}_{\ell-1}$ and that $\la \Ld_{\ell-1}(\Ind),Y\ra
=\la \Ld_{\ell-2}(\Ind),Z\ra$. 

It is clear that $Z\in \YY_{\ell-2}$ and that $Z$ is positive
semidefinite, as well as $\tau_i\circ\tau_{i+1}\circ\dots\circ
\tau_{\ell-2}(Z)\succeq 0$ for all $i\leq \ell-2$. That $\la I,Z\ra
=1$ follows easily from \eqref{eq:y trace} and from the definition of
$\tau_{\ell-1}$. It remains to take care of the objective
value. Applying \eqref{eq:y objective},

\begin{align*}
&\la \Ld_{\ell-2}(\Ind),Z\ra = (\ell-1)\sum_{K\in \Ind_{\ell-2}} z(K)
                              +\ell(\ell-1)\sum_{F\in \Ind_{\ell-1}}
                              z(F)\\
&= (\ell-1) \sum_K \frac{1}{\ell} \sum_{F\,:\,K\subset F}
  y(F)+(\ell-1)\ell\sum_F \Big(\frac{1}{\ell(\ell-1)}y(F)+\frac{1}{\ell-1}\sum_{H\,:\,F\subset H} y(H)\Big)
\end{align*}
where in the sums we restrict to elements in $\Ind$. Taking account of
the fact that every subset of an independent set is also an
independent set, we obtain
\begin{equation*}
\la \Ld_{\ell-2}(\Ind),Z\ra=\ell\sum_{F\in \Ind_{\ell-1}} y(F)
                              +\ell(\ell+1)\sum_{H\in \Ind_{\ell}}y(H)=
\la \Ld_{\ell-1}(\Ind),Y\ra.
\end{equation*}
\end{proof}

\section{Theta numbers of random complexes}\label{sec:thetak random}

A random model $X^k(n,p)$ for simplicial complexes of arbitrary fixed dimension $k$ was introduced by Linial and Meshulam~\cite{LiMe} as a higher dimensional
analog of the Erd\"os-R\'enyi model $G(n,p)$ for random graphs. It has
vertex set $[n]=\{1,\dots,n\}$, complete $(k-1)$-skeleton, and each
element of $\binom{[n]}{k+1}$ is added as a $k$-dimensional face of $X^k(n,p)$
independently with probability $p$. Here $p=p(n)$ is a function of
$n$, and we let $q:=1-p$.  In this section we analyze the theta number
of $X^k(n,p)$ for 'dense' complexes, i.e., for $p$ in the range
$[c_0\log(n)/n, 1-c_0\log(n)/n]$. 

The study of the theta number of random graphs $G(n,p)$ was
initiated by Juh\'asz in \cite{Ju} who proved that, in the case of
constant probability $p$,  $\vartheta(G(n,p))=\Theta(\sqrt{nq/p})$ holds with probability
tending to $1$. In
subsequent works, the range of probabilities  for which Juh\'asz'
result holds was extended, until 
in \cite{CO}, Coja-Oghlan was able to cover $c_0/n\leq p\leq 1-c_0/n$ for some
sufficiently large constant $c_0$.

We will restrict ourselves to the range $c_0\log(n)/n\leq p\leq
1-c_0\log(n)/n$ because we will need the following estimates:
\begin{theorem}[\cite{FO, HKP}] Let $A$ denote the adjacency matrix of $G(n,p)$.
For every $c>0$ there exists $c_0>0, c'>0, c''>0$ such that, if  $c_0\log(n)/n\leq p\leq 1-c_0\log(n)/n$,
\begin{equation}\label{eq:concentration1}
\lambda_{\max}(pJ-A)\leq c'\sqrt{pq(n-1)}
\end{equation}
and 
\begin{equation}\label{eq:concentration2}
|\lambda_{\min}(A)|\leq c''\sqrt{pq(n-1)}.
\end{equation}
with probability at least equal to $1-n^{-c}$.
\end{theorem}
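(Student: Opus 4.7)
The plan is to reduce both estimates to a single spectral-norm bound on the centered matrix $B := A - pJ + pI$, whose entries are independent above the diagonal, centered, bounded by $1$ in absolute value, and of variance $pq$, and whose diagonal vanishes. Since $pJ$ has eigenvalues $pn$ (simple) and $0$ (with multiplicity $n-1$), Weyl's inequality yields
\begin{align*}
\lambda_{\max}(pJ-A) = p - \lambda_{\min}(B) &\leq p + \|B\|, \\
|\lambda_{\min}(A)| \leq -\lambda_{\min}(B) + p &\leq p + \|B\|,
\end{align*}
where $\|\cdot\|$ denotes the operator norm. Since $p = o\bigl(\sqrt{pq(n-1)}\bigr)$ throughout the allowed range of $p$, it suffices to prove
\begin{equation*}
\Pr\bigl(\|B\| \geq C\sqrt{pq(n-1)}\bigr) \leq n^{-c}
\end{equation*}
for a constant $C$ depending only on $c$.

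The natural attack is the F\"uredi--Koml\'os trace moment method: starting from
\begin{equation*}
\Pr\bigl(\|B\| \geq t\bigr) \leq t^{-2k}\,\mathbb{E}\bigl[\trace(B^{2k})\bigr]
\end{equation*}
and expanding $\trace(B^{2k})$ as a sum over closed walks of length $2k$ in $[n]$, only walks that traverse every edge an even number of times survive in expectation. Grouping these walks by the labelled shape of their underlying multigraph and using the variance $pq$ of each independent entry, the dominant contribution comes from planar double trees on $k+1$ vertices and gives
\begin{equation*}
\mathbb{E}\bigl[\trace(B^{2k})\bigr] \leq (1+o(1))\, n\, C_k\, \bigl(pq(n-1)\bigr)^k,
\end{equation*}
where $C_k$ is the $k$-th Catalan number. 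Choosing $k \sim \log n$ and $t = C\sqrt{pq(n-1)}$ with $C$ sufficiently larger than $2$ turns Markov's inequality into the required $n^{-c}$ tail bound.

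The hard part lies at the sparse end of the allowed range, where $p$ approaches the threshold $c_0 \log n / n$. There, walks revisiting the \emph{same vertex} many times (rather than merely the same edge) produce contributions proportional to powers of the maximum degree, and the classical Wigner bookkeeping is not sharp enough to absorb them. The fix employed in \cite{FO, HKP} is a decomposition of $B$ into a ``light'' part, supported on rows and columns indexed by vertices of typical degree, and a ``heavy'' part on the remaining vertices; the light part is bounded by a refined trace method, while the heavy part is controlled by a direct discrepancy argument on the bipartite subgraph spanned by atypical vertices. The hypothesis $p \in [c_0 \log n / n,\, 1 - c_0 \log n / n]$ is precisely what forces the maximum degree of $G(n,p)$ to concentrate around its expectation $np$ with failure probability at most $n^{-c}$, so that the light/heavy split combined with a union bound delivers the required rate uniformly in $p$.
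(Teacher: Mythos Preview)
The paper does not prove this theorem at all: it is stated as a citation from \cite{FO, HKP} and used as a black box. There is therefore no ``paper's own proof'' to compare against. Your sketch is a faithful outline of the argument those references actually carry out (centering to $B=A-pJ+pI$, trace/moment method for the bulk, and the light/heavy vertex decomposition of Feige--Ofek to handle the sparse regime near $p\sim c_0\log n/n$), and the reductions $\lambda_{\max}(pJ-A)=p-\lambda_{\min}(B)$ and $|\lambda_{\min}(A)|\le p+\|B\|$ via Weyl are correct, as is the observation that $p=o(\sqrt{pq(n-1)})$ on the stated range. As a self-contained proof it would still need the combinatorial bookkeeping of the trace expansion and the discrepancy bound for the heavy part made precise, but as a proposal it is sound; just be aware that in the context of this paper the intended ``proof'' is simply the citation.
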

With the above, it is rather straightforward to obtain:
\begin{theorem}\label{th:theta random graph}
For every $c>0$ there exists $c_0>0, c_1>0, c_2>0$ such that, if $c_0\log(n)/n\leq p\leq 1-c_0\log(n)/n$, 
\begin{equation}
c_1\sqrt{(n-1)q/p} \leq \vartheta(G(n,p))\leq c_2\sqrt{(n-1)q/p}.
\end{equation}
with probability at least equal to $1-n^{-c}$.
\end{theorem}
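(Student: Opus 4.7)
The plan is to establish the upper and lower bounds separately, relying on the semidefinite-programming duality \eqref{eq:theta dual} together with the spectral concentration \eqref{eq:concentration1}. For the upper bound, I would use the dual formulation with the particular choice $T=-A/p$, where $A$ denotes the adjacency matrix of $G(n,p)$. This $T$ is a feasible dual variable: indeed $T_{v,v'}=-A_{v,v'}/p=0$ whenever $\{v,v'\}\notin E$, and $T_{v,v}=0$ since $A$ has zero diagonal. With this choice,
\[
Z\;=\;J+T\;=\;\tfrac{1}{p}(pJ-A),
\]
so $\lambda_{\max}(Z)=\lambda_{\max}(pJ-A)/p$, and \eqref{eq:concentration1} immediately gives
\[
\vartheta(G(n,p))\;\leq\;\lambda_{\max}(Z)\;\leq\;\tfrac{c'}{p}\sqrt{pq(n-1)}\;=\;c'\sqrt{(n-1)q/p}
\]
with probability at least $1-n^{-c}$, so the upper bound holds with $c_2:=c'$.

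For the lower bound, I would invoke Lovász' sandwich inequality $\vartheta(G)\vartheta(\overline{G})\geq n$ from \cite[Corollary 2]{Lov}. Since $\overline{G(n,p)}$ is distributed as $G(n,q)$ and the hypothesis $c_0\log(n)/n\leq p\leq 1-c_0\log(n)/n$ is symmetric in $p$ and $q$, the upper bound just obtained applies verbatim to $\overline{G(n,p)}$ with the roles of $p$ and $q$ interchanged, giving $\vartheta(\overline{G(n,p)})\leq c'\sqrt{(n-1)p/q}$ with probability at least $1-n^{-c}$. Combined with the sandwich inequality,
\[
\vartheta(G(n,p))\;\geq\;\frac{n}{\vartheta(\overline{G(n,p)})}\;\geq\;\frac{n}{c'\sqrt{(n-1)p/q}}\;\geq\;\tfrac{1}{c'}\sqrt{(n-1)q/p},
\]
where the last inequality uses $n/(n-1)\geq 1$. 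Hence the lower bound holds with $c_1:=1/c'$. A union bound over the two concentration events, one for the adjacency matrix of $G(n,p)$ and one for that of its complement, guarantees that both inequalities hold simultaneously with probability at least $1-2n^{-c}$; this factor of two can be absorbed into $1-n^{-c}$ by applying \eqref{eq:concentration1} initially with $c$ replaced by $c+1$ and enlarging $c_0$ accordingly.

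The technical weight of the argument is entirely carried by the spectral concentration estimate \eqref{eq:concentration1}, which we take as given from \cite{FO,HKP}; once it is available in both directions, the rest is a direct use of semidefinite duality and the Lovász sandwich. The main obstacle I anticipate is therefore only bookkeeping: verifying the feasibility of $T=-A/p$ cleanly, setting up the complementary concentration event with the right parameters, and tracking the constants carefully enough that the union bound still yields the advertised failure probability $n^{-c}$. The companion estimate \eqref{eq:concentration2} is not strictly needed along this route, but it would enable an alternative Hoffman-style derivation of the upper bound by optimizing $\lambda_{\max}(J+tA)$ over $t$, in direct analogy with the ratio bound \eqref{eq:ratio bound}.
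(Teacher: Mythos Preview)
Your upper bound is identical to the paper's: both take $T=-A/p$ in the dual \eqref{eq:theta dual}, write $Z=J+T=(pJ-A)/p$, and invoke \eqref{eq:concentration1}.

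For the lower bound you take a genuinely different route. The paper works directly in the primal \eqref{eq:theta primal}, taking $Y'=\overline{A}-\lambda_{\min}(\overline{A})I$ (with $\overline{A}$ the adjacency matrix of the complement) and normalizing; this produces a feasible $Y$ whose objective value is controlled by $|\lambda_{\min}(\overline{A})|$ via \eqref{eq:concentration2}. You instead invoke the Lov\'asz sandwich $\vartheta(G)\vartheta(\overline{G})\geq n$ and feed in the already-proved upper bound for $\overline{G(n,p)}\sim G(n,q)$. Your argument is correct and arguably cleaner for graphs: it needs only \eqref{eq:concentration1} (applied once to $G$ and once to $\overline{G}$) and never touches \eqref{eq:concentration2}. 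The trade-off is portability. The paper's primal construction is exactly the template reused in the proof of Theorem~\ref{th:theta random complex} for $\vartheta_k(X^k(n,p))$, where no analog of the sandwich inequality is available; your approach would not extend to that setting. Your closing remark that \eqref{eq:concentration2} could alternatively drive a Hoffman-type \emph{upper} bound is slightly off-target: in the paper \eqref{eq:concentration2} is what powers the \emph{lower} bound through the primal matrix $\overline{A}-\lambda_{\min}(\overline{A})I$.
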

Indeed, following the method of Juh\'asz, the upper bound is obtained via
the dual formulation for the theta number \eqref{eq:theta dual} and
the matrix $Z=J-A/p$, where $A$ is the adjacency matrix of $G(n,p)$,
while the lower bound follows from the choice $Y=Y'/\la I,Y'\ra$ in the
primal formulation \eqref{eq:theta primal}, where
$Y=\overline{A}-\lambda_{\min}(\overline{A})I$, $\overline{A}$ being
the adjacency matrix of the complementary graph of $G(n,p)$.

\subsection{The theta number of $X^k(n,p)$}
We will establish the following similar result for random simplicial complexes $X^k(n,p)$:
\begin{theorem}\label{th:theta random complex}
For every $k\geq 1$ and  $c>0$, there exists $c_0>0,c_1>0,c_2>0$ such that, if $c_0\log(n)/n\leq p\leq 1-c_0\log(n)/n$, 
\begin{equation*}
c_1\sqrt{(n-k)q/p} \leq \vartheta_k(X^k(n,p))\leq c_2\sqrt{(n-k)q/p}.
\end{equation*}
with probability at least equal to $1-n^{-c}$.
\end{theorem}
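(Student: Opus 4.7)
My plan is to mirror the strategy for graphs in Theorem~\ref{th:theta random graph}: I construct explicit feasible matrices in the primal \eqref{eq:thetak primal} and dual \eqref{eq:thetak dual} formulations from the adjacency matrix $A(X)$ of $X=X^k(n,p)$ and of its complement. Since $X$ has complete $(k-1)$-skeleton, one has $\Ld_{k-1}=A_c+kI$, where $A_c$ denotes the adjacency matrix of the complete complex $K_n^k$. The engine of the whole argument is a higher-dimensional analog of \eqref{eq:concentration1}, namely a concentration inequality of the form
\[
\|A(X)-pA_c\|\le c'\sqrt{pq(n-k)}
\]
valid with probability at least $1-n^{-c}$ for $c_0\log(n)/n\le p\le 1-c_0\log(n)/n$.

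For the upper bound I set $T:=-A(X)/p$ in \eqref{eq:thetak dual}. Since $A(X)$ has zero diagonal and since $A(X)_{F,F'}=0$ whenever $F\cup F'\notin X_k$, the feasibility constraints are automatic. Then $\Ld_{k-1}+T=kI+p^{-1}(pA_c-A(X))$, and the concentration inequality yields $\vartheta_k(X)\le k+c'\sqrt{q(n-k)/p}\le c_2\sqrt{q(n-k)/p}$ for $n$ large. For the lower bound I consider the complementary complex $\overline{X}\sim X^k(n,q)$, set $\alpha:=-\lambda_{\min}(A(\overline{X}))$, and take $Y':=A(\overline{X})+\alpha I$, normalized as $Y:=Y'/\la I,Y'\ra$, in \eqref{eq:thetak primal}; positivity is built in, and the support and $\epsilon$-symmetry conditions are inherited from the structure of $A(\overline{X})$. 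A direct computation gives $\la I,Y'\ra=\alpha\binom{n}{k}$ and $\la\Ld_{k-1},Y'\ra=k\alpha\binom{n}{k}+k(k+1)|\overline{X}_k|$, so
\[
\vartheta_k(X)\ge k+\frac{k(k+1)\,|\overline{X}_k|}{\alpha\binom{n}{k}}.
\]
A Chernoff bound gives $|\overline{X}_k|\ge (q/2)\binom{n}{k+1}$, while the concentration inequality applied to $\overline{X}$, together with $\lambda_{\min}(qA_c)=-qk$, yields $\alpha\le qk+c''\sqrt{pq(n-k)}$. In the regime $p\ge c_0\log(n)/n$ the $qk$ term is absorbed once $c_0$ is taken large enough depending on $k$, and $\vartheta_k(X)\ge c_1\sqrt{q(n-k)/p}$ follows.

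The main obstacle is establishing the concentration inequality itself. Writing $A(X)-pA_c=\sum_{H\in\binom{V}{k+1}}(\xi_H-p)M_H$, where $\xi_H=\mathbb{1}[H\in X_k]$ and $M_H$ is the signed matrix with $(M_H)_{F,F'}=\epsilon_{F,F'}$ for distinct $F,F'\subset H$ and zero otherwise, the summands are independent and centered with $\|M_H\|=O(k)$. Using the decomposition $M_H=D_H-v_Hv_H^T$ with $v_H(F)=[H:F]$ for $F\subset H$ and $D_H=\operatorname{diag}(\mathbb{1}[F\subset H])$, one computes
\[
\sum_H M_H^2=(n-k)I+(k-1)\Lu_{k-1},
\]
whose operator norm is $k(n-1)$ by Example~\ref{ex:complete}. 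A direct Matrix Bernstein application only delivers the required bound at the cost of a spurious $\sqrt{\log n}$ factor; to obtain the sharp, log-free rate I would adapt the trace-moment technique of Feige--Ofek~\cite{FO} (as refined in \cite{HKP}) to this signed, higher-dimensional setting by estimating $\trace((A(X)-pA_c)^{2q})$ for $q\sim\log n$ and carrying out a careful combinatorial count of closed walks on the graph whose vertices are the elements of $\binom{V}{k}$ and whose edges connect pairs sharing a $(k-1)$-face. Once this concentration estimate is in hand, the primal--dual computations described above are routine.
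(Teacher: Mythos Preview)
Your approach is sound in principle and your lower-bound computation coincides with the paper's, but the overall route differs substantially, and the hardest step in your plan is only sketched.

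The paper never attempts a direct concentration bound on $\|A(X)-pA_c\|$. Instead it uses \emph{Garland localization via links} to reduce everything to the graph case. For the upper bound it first proves the deterministic inequality
\[
\vartheta_k(X)\le k\max_{K\in X_{k-2}}\vartheta(\lk_X(K))
\]
(Proposition~\ref{prop:link}), whose proof is a short SDP argument based on the decompositions $\Ld_{k-1}=\sum_K\rho_KJ_K\rho_K$ and $Y=\sum_K Y_K-(k-1)\diag(Y)$. Since each link $\lk_X(K)$ in $X^k(n,p)$ is distributed as $G(n-k+1,p)$, the upper bound then follows from Theorem~\ref{th:theta random graph} and a union bound over the $O(n^{k-1})$ links. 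For the lower bound the paper uses the same feasible $Y$ as you do, but bounds $|\lambda_{\min}(\overline{A})|$ again by localization: writing $\overline{A}=\sum_K\overline{A}_K$ and observing that $\overline{A}_K$ is sign-conjugate to the adjacency matrix of $\lk_{\overline{X}}(K)\sim G(n-k+1,q)$, one obtains $|\lambda_{\min}(\overline{A})|\le k\max_K|\lambda_{\min}(A_{\lk_{\overline{X}}(K)})|$, and \eqref{eq:concentration2} plus a union bound finishes.

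What this buys is that the entire argument runs on the already-available graph estimates \eqref{eq:concentration1}--\eqref{eq:concentration2} and Theorem~\ref{th:theta random graph}; no new random-matrix work is needed. Your plan, by contrast, places all the weight on the inequality $\|A(X)-pA_c\|\le c'\sqrt{pq(n-k)}$, for which you propose a from-scratch trace-moment proof. That adaptation is feasible but genuinely laborious; in fact the reference~\cite{GW} cited in the paper establishes exactly this type of spectral bound, and does so via the same localization-to-links idea rather than trace moments. So the shortcut you are missing is that the link decomposition already delivers the spectral control you need with almost no extra effort, and your variance computation $\sum_H M_H^2=(n-k)I+(k-1)\Lu_{k-1}$, while correct, becomes unnecessary.
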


For comparison, the independence number of $X^k(n,p)$ is of the order $(\log(n^k p)/p)^{1/k}$ (see \cite{KS}).
In the range $c_0\log(n)/n\leq p\leq 1-c_0\log(n)/n$, the
eigenvalues of the adjacency matrix of $X^k(n,p)$ have been studied in
\cite{GW}.  We will closely follow the methods developed in \cite{GW}, in
particular the role played by the so-called \emph{links} of $X$, an idea going back to the work of Garland \cite{Ga}.
By definition, for a $k$-dimensional simplicial complex $X$ and a
$(k-2)$-face $K$ of $X$, \emph{the link $\lk_X(K)$} is the graph
with vertices  $\{v\in V\ : \ K\cup\{v\}\in X_{k-1}\}$, and  edges
$\{\{v,w\} \ :\ K\cup\{v,w\}\in X_k\}$. 
In view of the proof of Theorem \ref{th:theta random complex}, we will first establish a
relationship between the theta number of a simplicial complex and that
of its links.

\begin{proposition}\label{prop:link}
Let $X$ be a $k$-dimensional simplicial complex with complete
$(k-1)$-skeleton. Then
\begin{equation}\label{eq:link}
\vartheta_k(X)\leq k\max_{K\in X_{k-2}} \vartheta(\lk_X(K)).
\end{equation}
\end{proposition}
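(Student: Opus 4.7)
The plan is to deduce \eqref{eq:link} from the dual formulation \eqref{eq:thetak dual} by lifting dual optima for the theta numbers of the links of $X$. Since $X$ has complete $(k-1)$-skeleton, every $K\in\binom{V}{k-1}$ belongs to $X_{k-2}$ and its link $\lk_X(K)$ is a graph on vertex set $V\setminus K$. For each such $K$, I would pick a dual optimum $Z^K=J^K+T^K$ for $\vartheta(\lk_X(K))$ via \eqref{eq:theta dual}, so that $J^K$ is the all-ones matrix on $V\setminus K$, $T^K$ has zero diagonal and $T^K_{v,w}=0$ whenever $\{v,w\}$ is not an edge of $\lk_X(K)$, and $\lambda_{\max}(Z^K)=\vartheta(\lk_X(K))$. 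I would then define a matrix $T$ indexed by $\binom{V}{k}$ by
\begin{equation*}
T_{F,F'}:=\epsilon_{F,F'}\,T^K_{v,w}\text{ when } F=K\cup\{v\},\ F'=K\cup\{w\},\ F\cap F'=K,
\end{equation*}
and $T_{F,F'}:=0$ whenever $|F\cap F'|<k-1$, which also makes the diagonal vanish.

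Feasibility of $T$ for \eqref{eq:thetak dual} reduces to checking the constraint at each $H\in\binom{V}{k+1}\setminus X_k$. Every pair $(F,F')$ with $F\cup F'=H$ has $F\cap F'=K$ of size $k-1$, and writing $\{v,w\}:=H\setminus K$ gives $\epsilon_{F,F'}T_{F,F'}=\epsilon_{F,F'}^2\,T^K_{v,w}=T^K_{v,w}$. But $K\cup\{v,w\}=H\notin X_k$, so $\{v,w\}$ is not an edge of $\lk_X(K)$ and $T^K_{v,w}=0$; hence $\sum_{F\cup F'=H}\epsilon_{F,F'}T_{F,F'}=0$.

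The heart of the argument is a block decomposition of $Z:=\Ld_{k-1}+T$. Let $s^K_v:=[K\cup\{v\}:K]\in\{\pm 1\}$ and let $D^K$ be the diagonal matrix on $\binom{V}{k}$ with entry $s^K_v$ at the index $K\cup\{v\}$ for $v\in V\setminus K$, and $0$ elsewhere. Using $\epsilon_{F,F'}=s^K_vs^K_w$ for $F\cap F'=K$ together with $(s^K_v)^2=1$, one checks that
\begin{equation*}
\Ld_{k-1}=\sum_{K\in\binom{V}{k-1}} D^K\widetilde{J}^K D^K\quad\text{and}\quad T=\sum_{K} D^K\widetilde{T}^K D^K,
\end{equation*}
where $\widetilde{J}^K,\widetilde{T}^K$ denote the zero-extensions of $J^K,T^K$ to $\binom{V}{k}$ via $v\mapsto K\cup\{v\}$; the diagonal entry $k$ of $\Ld_{k-1}$ is recovered because each $F\in\binom{V}{k}$ has exactly $k$ subsets $K\subset F$ of size $k-1$, each contributing $(s^K_v)^2=1$. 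Hence $Z=\sum_K D^K\widetilde{Z}^K D^K$, and for $x\in\R^{\binom{V}{k}}$, setting $y^K_v:=s^K_v x_{K\cup\{v\}}$ gives
\begin{equation*}
\la x, Zx\ra=\sum_K \la y^K, Z^K y^K\ra\leq \Big(\max_K \vartheta(\lk_X(K))\Big)\sum_K \|y^K\|^2.
\end{equation*}
Finally $\sum_K\|y^K\|^2=\sum_{F}\#\{K\subset F:|K|=k-1\}\,x_F^2=k\|x\|^2$, yielding $\lambda_{\max}(Z)\leq k\max_K \vartheta(\lk_X(K))$ and therefore \eqref{eq:link}. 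The main technical obstacle is verifying the decomposition identity for $\Ld_{k-1}$, which forces the sign factors $\epsilon_{F,F'}$ and the combinatorial weight $k$ to align correctly; once this is in place the eigenvalue bound is a straightforward aggregation.
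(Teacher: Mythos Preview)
Your proof is correct and proceeds via the dual formulation \eqref{eq:thetak dual}, whereas the paper argues through the primal \eqref{eq:thetak primal}. Both rest on the same decomposition $\Ld_{k-1}=\sum_K D^K\widetilde{J}^K D^K$ (the paper's $\rho_K$ is your $D^K$), but the direction is reversed: the paper starts from an optimal primal $Y$ for $\vartheta_k(X)$, localizes it to matrices $\rho_K Y_K\rho_K$ that are feasible for the primal of $\vartheta(\lk_X(K))$, and bounds $\langle\Ld_{k-1},Y\rangle=\sum_K\langle J,Z_K\rangle\le\sum_K\langle I,Y_K\rangle\,\vartheta(\lk_X(K))$ together with $\sum_K\langle I,Y_K\rangle=k$. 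You instead lift dual optima $Z^K$ of the links to a single dual-feasible $T$ and bound $\lambda_{\max}(\Ld_{k-1}+T)$ via the quadratic form. Your route has the advantage of producing an explicit dual witness for $\vartheta_k(X)$; the paper's route makes the role of each link's contribution (weighted by $\langle I,Y_K\rangle$) more transparent and does not require the link optima to be attained.

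One small wording slip: saying ``$T_{F,F'}:=0$ whenever $|F\cap F'|<k-1$, which also makes the diagonal vanish'' is off, since $|F\cap F|=k$. The diagonal case is simply not covered by your first clause and should be set to $0$ separately; equivalently, defining $T:=\sum_K D^K\widetilde{T}^K D^K$ from the start gives zero diagonal automatically because each $T^K$ has zero diagonal.
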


\begin{proof} Let $K\in X_{k-2}$. For a matrix $Y\in
  \R^{\binom{V}{k}\times \binom{V}{k}}$, we introduce its 
  \emph{localization at $K$} denoted $Y_K$ and defined by:
\begin{equation*}
(Y_K)_{F,F'}=\left\{
\begin{array}{ll}
Y_{F,F'}  & \text{ if }K\subset F \cap F'\\
0 &\text{ otherwise.}
\end{array}\right.
\end{equation*}
Let $\rho_K \in \R^{\binom{V}{k}\times \binom{V}{k}}$ denote the
diagonal matrix with $[F:K]$ as diagonal entries. Then we observe
that
\begin{equation}\label{eq:JK}
\Ld_{k-1}=\sum_{K\in X_{k-2}}  \rho_K J_K \rho_K.
\end{equation}
and that, if $Y_{F,F'}=0$ for all $(F,F')$ such that $|F\cup F'|\geq k+2$, 
\begin{equation}\label{eq:YK}
Y=\sum_{K\in X_{k-2}} Y_K - (k-1)\diag(Y).
\end{equation}

Now let $Y$ be an optimal solution of \eqref{eq:thetak primal}. Taking
account of \eqref{eq:JK} and \eqref{eq:YK},
\begin{align*}
\vartheta_k(X)=\la \Ld_{k-1},Y\ra &=\la \sum_{K}
                                    \rho_K J_K \rho_K,
                                    \sum_{K} Y_K \ra -(k-1)\la
                                    \Ld_{k-1}, \diag(Y)\ra \\
&=\sum_{K,K'} \la \rho_K J_K \rho_K, Y_{K'}\ra -k(k-1).
\end{align*}
If $K\neq K'$, we have
\begin{equation*}
\la \rho_K J_K \rho_K, Y_{K'}\ra=
\left\{
\begin{array}{ll}
Y_{F,F} &\text{ if }K\cup K'=F\\
0 &\text{ otherwise}
\end{array}
\right.
\end{equation*}
so, since $\trace(Y)=1$,
\begin{equation*}
\vartheta_k(X)=\sum_K \la \rho_K J_K\rho_K,  Y_K \ra =\sum_K \la J_K, \rho_K
Y_K \rho_K\ra.
\end{equation*}
Now, the crucial observation is that the matrix $\rho_K Y_K \rho_K$
gives rise to a feasible matrix of the semidefinite program
\eqref{eq:theta primal} defining the theta number of
$\lk_X(K)$. Indeed, let $Z_K$ be the matrix indexed by $V\setminus K$ and
defined by $(Z_K)_{v,w}=(\rho_K Y_K \rho_K)_{K\cup\{v\},
  K\cup\{w\}}$. This matrix inherits some properties of $Y$: The matrix $Z_K$ is positive semidefinite,
the entries of $Z_K$ associated to edges of $\lk_X(K)$
are equal to $0$. With obvious notations, we have $\la J_K, \rho_K Y_K
\rho_K\ra=\la J,Z_K\ra$ and $\la I,Z_K\ra=\la I, Y_K\ra $ so
we obtain
\begin{equation*}
\vartheta_k(X)\leq \sum_K \la I, Y_K \ra \vartheta(\lk_X(K)).
\end{equation*}
We have $\sum_K \la I, Y_K\ra=k\la I,Y\ra=k$ so the announced inequality follows immediately.
\end{proof}

\begin{proof}[Proof of Theorem \ref{th:theta random complex}]

For the upper bound, we apply Proposition \ref{prop:link}. 
The link $\lk_X(K)$ of a ($k-2$)-face $K$ in a random complex $X=X^k(n,p)$ is an
Erd\"os-Renyi random graph
on $V\setminus K$ with the same probability $p$. We can thus apply 
Theorem \eqref{th:theta random graph} and a union bound to obtain
the result. We note that, since the number of such faces is of the order of $n^{k-1}$, for the probability of the bad event to be, say, less than $n^{-c}$ we need to apply Theorem \eqref{th:theta random
  graph} for the larger value $c+k-1$ instead of $c$, explaining the
need for an arbitrary large power of $n$ in the convergence speed of probabilities.

In order to find a lower bound of $\vartheta_k(X)$, we consider the
matrix $Y=\overline{A}-\lambda_{\min}(\overline{A})I$ where
$\overline{A}$ denotes the adjacency matrix of the complementary
$k$-complex $\OX$. The feasibility conditions of
\eqref{eq:thetak primal} are fulfilled by $Y$ except for the
normalization condition $\la I, Y\ra=1$.  We have $\langle I,Y\rangle= - \binom{n}{k} \lambda_{\min}(\overline{A})$.
Moreover, $\langle \Ld_{k-1},Y\rangle=
k(k+1)|\OX_k|-k\binom{n}{k}\lambda_{\min}(\overline{A})$, so
\begin{equation*}
\vartheta_k(X)\geq k\bigg(1 +\frac{(k+1) |\OX_k|}{-\binom{n}{k} \lambda_{\min}(\overline{A})}\bigg).
\end{equation*}
The number $|\OX_k|$ of $k$-faces of $\OX=X^k(n,q)$ is a random variable binomially distributed 
in $[\binom{n}{k+1}]$ with probability $q$. Hence, by a straightforward
application of a Chernoff bound, for every $c>0$,  $|\OX_k|$ is at least of the order
$\binom{n}{k+1} q$ with probability at least $1-n^{-c}$. 
It remains to upper bound $|\lambda_{\min}(\overline{A})|$. For this,
we apply the localization procedure that we have already encountered
in the proof of Proposition \ref{prop:link}:
\begin{equation*}
\overline{A}=\sum_{K\in X_{k-2}} \overline{A}_K.
\end{equation*}
Then, for every $x=(x_F)_{F\in \binom{V}{k}}$, if $x_K$ denotes the
vector obtained from $x$ by setting to $0$ the coordinates of $x$ associated to
faces $F$ not containing $K$, 
\begin{equation*}
\la \overline{A}x,x\ra=\sum_K\la \overline{A}_K x,x\ra=\sum_K\la
\overline{A}_K x_K,x_K\ra.
\end{equation*}
The matrix $\overline{A}_K$ has the same spectrum as
$\rho_K\overline{A}_K\rho_K$. The latter is identical to the adjacency
matrix $A_{\lk_{\OX}(K)}$ of the graph
$\lk_{\OX}(K)$ on the entries indexed by $\{ F=K\cup\{v\},\
v\in V\setminus K\}$, and zero elsewhere. So, its non-zero spectrum is
that of $A_{\lk_{\OX}(K)}$ and hence:
\begin{equation*}
\la \overline{A}x,x\ra\geq \sum_K 
\lambda_{\min}(A_{\lk_{\OX}(K)})\la x_K,x_K\ra.
\end{equation*}
The links $\lk_{\OX}(K)$ are random graphs $G(n-k+1,q)$ so,
applying \eqref{eq:concentration2} and a union bound, we find that,
with probability at least equal to $1-n^{-c}$, for a large enough
constant $c''$,
\begin{equation*}
\la \overline{A}x,x\ra\geq -c''\sqrt{pq(n-k)}\sum_K 
\la x_K,x_K\ra=-c''k\sqrt{pq(n-k)}\la x,x\ra.
\end{equation*}
We have obtained the desired upper bound
$|\lambda_{\min}(\overline{A})|\leq c'''\sqrt{pq(n-k)}$. Putting
everything together, we obtain the announced lower bound for $\vartheta_k(X)$.
\end{proof}

\subsection{The hierarchy of theta numbers of $G(n,p)$}\label{sec:random thetal}

In this last subsection, we restrict ourselves to the case of random graphs $G(n,p)$  
and analyze the hierarchy of theta numbers $\vartheta_{\ell}(G(n,p))$ for constant values of $\ell$. The restriction to random graphs, i.e., random complexes of dimension $1$, is purely for simplicity. The assumption of constant $\ell$, however, is essential.
Analyzing the complete hierarchy $\hat{\vartheta}_{\ell}(X)$ of a random complex $X$ for non-constant $\ell$ appears to be a difficult task. It would be interesting to know for which values of $\ell$ the theta number $\vartheta_{\ell}(G(n,p))$ is close to the independence number. Unfortunately, such questions seem to be out of the reach of the methods we apply here.

\begin{theorem}\label{th:thetal random graph}
For every $\ell\geq 1$ and  $c>0$, there exists $c_0>0,c_1>0,c_2>0$
such that, if $q^\ell\geq c_0\log(n)/n$ and $pq^{\ell-1}\geq c_0\log(n)/n$, 
\begin{equation*}
c_1\sqrt{nq^\ell/p} \leq \vartheta_\ell(G(n,p))\leq c_2\sqrt{nq^\ell/p}.
\end{equation*}
with probability at least equal to $1-n^{-c}$.
\end{theorem}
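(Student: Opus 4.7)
The plan is to adapt the proof of Theorem \ref{th:theta random complex} to the higher-order setting, working with the $(\ell-1)$-skeleton of the independence complex $\Ind=\Ind(G(n,p))$. The key observation is that the localization identity
\begin{equation*}
\Ld_{\ell-1}(\Ind) = \sum_{K \in \Ind_{\ell-2}} \rho_K J_K \rho_K
\end{equation*}
continues to hold, since every $(\ell-1)$-subset of an independent $\ell$-set is itself independent. This means the localization-at-$K$ trick used in Proposition \ref{prop:link} and in the proof of Theorem \ref{th:theta random complex} carries over verbatim to $\vartheta_\ell(G)$.

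For the upper bound, the argument from the proof of Proposition \ref{prop:link} yields
\begin{equation*}
\vartheta_\ell(G) \leq \ell \max_{K \in \Ind_{\ell-2}(G)} \vartheta\bigl(G[V \setminus K \setminus N(K)]\bigr),
\end{equation*}
because a union $F \cup F' = K \cup \{v,w\}$ fails to lie in $\Ind_\ell$ precisely when $v$ or $w$ is adjacent to $K$ or when $\{v,w\}$ is an edge of $G$; thus each localized matrix $Z_K$ is feasible for the graph-theoretic theta program of $G[V \setminus K \setminus N(K)]$. Each of these link graphs is an Erd\"os--R\'enyi graph $G(m_K,p)$ with $m_K$ sharply concentrated around $(n-\ell+1)q^{\ell-1}$ by Chernoff. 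Applying Theorem \ref{th:theta random graph} to each link (with the constant $c+\ell-1$ in place of $c$) and taking a union bound over the $O(n^{\ell-1})$ choices of $K$ yields $\vartheta_\ell(G(n,p)) \leq c_2\sqrt{nq^\ell/p}$ with probability at least $1-n^{-c}$.

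For the lower bound, I would follow the Juh\'asz-style construction of Theorem \ref{th:theta random complex}. Let $A$ be the matrix indexed by $\Ind_{\ell-1}$ with $A_{F,F'} = \epsilon_{F,F'}$ if $F \cup F' \in \Ind_\ell$ and zero otherwise, and set $Y = A - \lambda_{\min}(A)\,I$. Then $Y$ satisfies all constraints of \eqref{eq:thetal primal} except the normalization $\langle I,Y\rangle=1$, and a direct computation using $\langle \Ld_{\ell-1}(\Ind), A\rangle = \ell(\ell+1)|\Ind_\ell|$ gives
\begin{equation*}
\vartheta_\ell(G) \geq \ell + \frac{\ell(\ell+1)\,|\Ind_\ell|}{|\Ind_{\ell-1}|\cdot |\lambda_{\min}(A)|}.
\end{equation*}
Chernoff bounds applied to the binomially distributed quantities $|\Ind_\ell|$ and $|\Ind_{\ell-1}|$ give $|\Ind_\ell|/|\Ind_{\ell-1}| = \Theta(nq^\ell)$ with high probability. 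To control $|\lambda_{\min}(A)|$, I again decompose $A = \sum_{K \in \Ind_{\ell-2}} \rho_K A_K \rho_K$: after conjugation by $\rho_K$ the matrix $\rho_K A_K \rho_K$ coincides, on its support, with the adjacency matrix of $\overline{G[V\setminus K\setminus N(K)]}\sim G(m_K, q)$, whose smallest eigenvalue is $O(\sqrt{pq\,m_K})$ by \eqref{eq:concentration2}. The quadratic form argument from the proof of Theorem \ref{th:theta random complex} (using $\sum_K \|x_K\|^2 = \ell\|x\|^2$) then yields $|\lambda_{\min}(A)| = O(\sqrt{p\,nq^\ell})$ with high probability, which combined with the above ratio gives $\vartheta_\ell(G(n,p)) \geq c_1\sqrt{nq^\ell/p}$.

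The main obstacle is handling the simultaneous concentration over all $\binom{n}{\ell-1}$ links: both directions of the bound ultimately invoke Theorem \ref{th:theta random graph} or estimate \eqref{eq:concentration2} at each link $K$, and the hypotheses $q^\ell \geq c_0\log(n)/n$ and $pq^{\ell-1} \geq c_0\log(n)/n$ are precisely what is needed to guarantee that each link, having $m_K \approx nq^{\ell-1}$ vertices, lies in the dense regime $c_0\log(m_K)/m_K \leq p \leq 1 - c_0\log(m_K)/m_K$ where those graph-level results apply with an arbitrarily large inverse polynomial failure probability.
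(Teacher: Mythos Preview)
Your proposal is correct and follows essentially the same route as the paper: the link-localization inequality $\vartheta_\ell(G)\leq \ell\max_K\vartheta(\lk_G(K))$ combined with Theorem~\ref{th:theta random graph} for the upper bound, and the Juh\'asz-type matrix $Y=A-\lambda_{\min}(A)I$ together with the decomposition $A=\sum_{K\in\Ind_{\ell-2}}A_K$ for the lower bound. One small slip: $|\Ind_\ell|$ and $|\Ind_{\ell-1}|$ are not binomially distributed (the indicator events for different $(\ell+1)$-sets are correlated); the paper handles the ratio instead by double-counting pairs $(A,B)\in\Ind_{\ell-1}\times\Ind_\ell$ with $A\subset B$ and noting that, conditional on $A\in\Ind_{\ell-1}$, the number of such $B$ is genuinely $\mathrm{Binomial}(n-\ell,q^\ell)$, which gives the needed lower bound on $|\Ind_\ell|/|\Ind_{\ell-1}|$ directly.
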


\begin{proof} We will sometimes use the expression \emph{with high probability}
  for an inequality that holds with probability at least
  $1-n^{-c}$ for all $c>0$, with appropriate constants depending on $c$. 

For an upper bound of $\vartheta_{\ell}(G(n,p))$, we apply
\begin{equation*}
\vartheta_{\ell}(G)\leq \ell \max_{K\in \binom{V}{\ell-1}}
\vartheta(\lk_G(K)).
\end{equation*}
Here, $\lk_G(K)$ is the graph on $V_K:=\{v\in V\ :
\{v,k\} \notin E(G(n,p)) \text{ for all } k\in K\}$ with edges $\{v,w\}$ if
  $K\cup\{v,w\}\in \binom{V}{\ell+1}\setminus \Ind_{\ell}$. 
If $K$ is independent, this condition simply means that $\{v,w\}$ is an edge of $G$, so
$\lk_G(K)$ is the graph $G[V_K]$ induced by $G$ on $V_K$. If
$G=G(n,p)$, the number
of vertices $n_K=|V_K|$ is itself a random variable. Since
$|K|=\ell-1$, $n_K$ follows a binomial distribution with parameters
$(n-\ell+1)$ and $q^{\ell-1}$. For $n_K$ to be concentrated around its
expected value $q^{\ell-1}(n-\ell+1)$ we need $q^{\ell-1}\geq
c_0\log(n)/n$ for some $c_0>0$.

Assuming  $n_K\leq cq^{\ell-1}n$ for some $c>0$, we have 
\begin{equation*}
\vartheta(G[V_K])\leq \vartheta(G(cq^{\ell-1}n,p))
\end{equation*}
because $G[V_K]$ can be viewed as an induced subgraph of $G(cq^{\ell-1}n,p)$.
We would like to apply Theorem \ref{th:theta random graph}. It requires $p$ and
$q$ to be greater that $c'_0 \log(q^{\ell-1}n)/(q^{\ell-1}n)$ and
holds with probability at least $1-(q^{\ell-1}n)^c$. 
All this will be fine if we assume:
\begin{equation*}
pq^{\ell-1}\geq c_1\log(n)/n\text{ and } q^{\ell}\geq c_1\log(n)/n
\end{equation*}
for a sufficiently large $c_1$.
With a union bound we obtain with high probability:
\begin{equation*}
\vartheta_{\ell}(G)\leq c\sqrt{nq^{\ell}/p}.
\end{equation*}

For the lower bound, we consider the matrix $Y=A-\lambda_{\min}(A)I$
where $A$ is the adjacency matrix of the $\ell$-skeleton of $\Ind$ and
we apply \eqref{eq:thetal primal}. We obtain
\begin{equation*}
\vartheta_{\ell}(X)\geq \frac{\la \Ld_{\ell-1}(\Ind),Y\ra}{\la I,Y\ra}
= \ell\Big( 1
+\frac{(\ell+1) |\Ind_{\ell}|}{-\lambda_{\min}(A)|\Ind_{\ell-1}|}\Big).
\end{equation*}
In order to estimate $|\lambda_{\min}(A)|$ we use $A=\sum_{K\in
  \Ind_{\ell-2}}A_K$
and remark that $A_K$ has the same non-zero eigenvalues as the adjacency matrix of the graph
$\lk_{\Ind}(K)$, itself being the graph $\overline{G}[V_K]$ induced by
$\overline{G}$ on $V_K$. 
We have 
\begin{align*}
\la Ax,x\ra&=\sum_{K\in \Ind_{\ell-2}}\la A_K x,x\ra=\sum_{K\in
             \Ind_{\ell-2}}\la A_K x_K,x_K\ra\\
&\geq \sum_{K\in \Ind_{\ell-2}} \lambda_{\min}(A_K) \la x_K,x_K\ra\\
&\geq \min_{K\in \Ind_{\ell-2}} \lambda_{\min}(A_K) \sum_{K\in
  \Ind_{\ell-2}}\la x_K,x_K\ra\\
&\geq \min_{K\in \Ind_{\ell-2}} \lambda_{\min}(A_K) \ell \la x,x\ra,
\end{align*}
so
\begin{equation*}
-\lambda_{\min}(A) =|\lambda_{\min}(A)|\leq \ell \cdot \max_K
|\lambda_{\min}(\overline{G}[V_K])|.
\end{equation*}

Like for the upper bound we have with high probability $n_K\leq
cq^{\ell-1}n$ for some $c>0$ and thus  
\begin{equation*}
|\lambda_{\min}(\overline{G}[V_K])|\leq
|\lambda_{\min}(G(cq^{\ell-1}n,q))|\leq c'\sqrt{pq^{\ell}n}
\end{equation*}
for some $c'>0$, under the same conditions on $p$ and $q$.

It remains to deal with the ratio $|\Ind_{\ell}|/|\Ind_{\ell-1}|$. For
this we will argue that $\Ind$ is almost regular. To be more precise
we apply double counting to the set
\begin{equation*}
D=\{(A,B)\in \Ind_{\ell-1}\times \Ind_\ell \ :\ A\subset B\}.
\end{equation*}
The number of $\ell$-subsets of $B$ is $\ell+1$ so
$|D|=(\ell+1)|\Ind_\ell|$.
For  a given $A$, the number $X_A$ of $B$ containing $A$ follows a
binomial distribution with parameters $n-\ell$ and $q^\ell$, with
expected value $q^\ell(n-\ell)$. With high probability (requires
$q^\ell\geq c\log(n)/n$) $X_A$ is larger that $c'q^\ell(n-\ell)$ and
so
\begin{equation*}
\frac{|\Ind_\ell|}{|\Ind_{\ell-1}|}\geq
\frac{c'q^\ell(n-\ell)}{\ell+1}.
\end{equation*}
Putting everything together and applying another union bound we obtain 
\begin{equation*}
\vartheta_\ell(G)\geq c\sqrt{nq^\ell/p}.
\end{equation*}

\end{proof}

\end{document}